\newcommand{\R}{\mathbb R}
\newcommand{\E}{\mathbb E}
\newcommand{\tr}{\mathrm{tr}}
\newtheorem{thm}{Theorem}[section]
\newtheorem{prop}[thm]{Proposition}
\theoremstyle{definition}
\newtheorem{defn}[thm]{Definition}
\theoremstyle{remark}
\newcommand{\ds}{\displaystyle}
\begin{document}

\title[Lorentz Surfaces with Parallel Normalized Mean Curvature Vector]
{On the Theory of Lorentz Surfaces with Parallel Normalized Mean Curvature Vector Field
in Pseudo-Euclidean 4-Space}

\author{Yana Aleksieva, Georgi Ganchev, Velichka Milousheva}

\address{Faculty of Mathematics and Informatics, Sofia University,
5 James Bourchier blvd., 1164 Sofia, Bulgaria}
\email{yana\_a\_n@fmi.uni-sofia.bg}
\address{Institute of Mathematics and Informatics, Bulgarian Academy of Sciences,
Acad. G. Bonchev Str. bl. 8, 1113 Sofia, Bulgaria}
\email{ganchev@math.bas.bg}
\address{Institute of Mathematics and Informatics, Bulgarian Academy of Sciences,
Acad. G. Bonchev Str. bl. 8, 1113, Sofia, Bulgaria;
"L. Karavelov" Civil Engineering Higher School, 175 Suhodolska Str., 1373 Sofia, Bulgaria}
\email{vmil@math.bas.bg}

\subjclass[2000]{Primary 53B30, Secondary 53A35, 53B25}
\keywords{Lorentz surface, fundamental existence and uniqueness theorem, parallel normalized mean curvature vector,
canonical parameters}

\begin{abstract}
We develop an invariant local theory of Lorentz surfaces in pseudo-Euclidean 4-space by use of a linear map
of Weingarten type. We find a geometrically determined moving frame field at each point of the surface and
obtain a system of geometric functions. We  prove a fundamental existence and uniqueness theorem in terms of
these functions. On any Lorentz surface with parallel normalized mean curvature vector field we introduce
special geometric (canonical) parameters and prove that any such  surface is determined  up to a rigid
motion by three invariant functions satisfying three natural partial differential equations.
In this way we minimize the number of functions and the number of partial differential equations determining
the surface, which solves the Lund-Regge problem for this class of surfaces.
\end{abstract}

\maketitle

\section{Introduction}

In pseudo-Euclidean spaces there are two types of surfaces according to their induced metric - Riemannian
or Lorentz metric. In the present paper we study Lorentz surfaces in pseudo-Euclidean space $\E^4_2$.

Recently, many classification results for Lorentz surfaces in pseudo-Euclidean spaces have been obtained
imposing some extra conditions on the mean curvature vector, the Gauss curvature, or the second fundamental form.

In \cite{Chen3}  B.-Y. Chen obtained several classification results for minimal Lorentz surfaces
in indefinite space forms. In particular, he completely classified all minimal Lorentz surfaces
in a pseudo-Euclidean space $\E^m_s$ with arbitrary dimension $m$ and arbitrary index $s$.

A natural extension of minimal surfaces are quasi-minimal (or marginally trapped) surfaces - these
are surfaces whose mean curvature vector is lightlike at each point of the surface.
Quasi-minimal surfaces in pseudo-Euclidean space have been very actively studied in the last few years.
In \cite{Chen-JMAA}  B.-Y. Chen classified
quasi-minimal Lorentz  flat surfaces in $\E^4_2$ and gave a complete classification of biharmonic Lorentz surfaces
in  $\E^4_2$ with lightlike mean curvature vector. Several other
families of quasi-minimal surfaces  have also been classified. For
example, quasi-minimal surfaces with constant Gauss curvature in
$\E^4_2$ were classified in \cite{Chen-HMJ, Chen-Yang}. Quasi-minimal
Lagrangian surfaces and quasi-minimal slant surfaces in complex
space forms were classified, respectively, in \cite{Chen-Dillen} and
\cite{Chen-Mihai}. The classification of quasi-minimal surfaces with parallel mean
curvature vector in  $\E^4_2$ is obtained
in \cite{Chen-Garay}.
In \cite{GM5} the classification of quasi-minimal rotational
surfaces of elliptic, hyperbolic or parabolic type is given.
For an up-to-date survey on  quasi-minimal
surfaces, see also \cite{Chen-TJM}.

A Lorentz surface of an indefinite space form is called parallel if its second fundamental form is parallel
with respect to the Van der Waerden-Bortolotti connection.  Parallel surfaces are important in differential
geometry as well as in physics since extrinsic invariants of such surfaces do not change from point to point.
Parallel Lorentz surfaces in four-dimensional Lorentzian space forms were studied by B.-Y. Chen and
J. Van der Veken in \cite{Chen-Veken}. An explicit classification of parallel Lorentz surfaces in
the pseudo-Euclidean space $\E^4_2$, in the pseudo-hyperbolic space $\mathbb{H}^4_2(-1)$, and in the neutral
pseudo-sphere $\mathbb{S}^4_2(1)$ is given by Chen et al. in \cite{Chen-Dillen-Veken}, \cite{Chen-CEJM},
and \cite{Chen-JMP}, respectively. The complete classification of parallel Lorentz surfaces in a
pseudo-Euclidean space with arbitrary codimension and arbitrary index is obtained in \cite{Chen-JGP}.

Another basic class of surfaces in Riemannian and pseudo-Riemannian geometry are the surfaces with parallel
mean curvature vector field, since they are critical points of some natural functionals and  play important
role in differential geometry,  the theory of harmonic maps, as well as in physics.
Surfaces with parallel mean curvature vector field in Riemannian space forms were classified in the early
1970s by Chen \cite{Chen1} and Yau  \cite{Yau}. Recently, spacelike surfaces with parallel mean
curvature vector field  in arbitrary indefinite space forms were classified in \cite{Chen1-2} and  \cite{Chen1-3}.
A complete classification of Lorentz surfaces with parallel mean curvature vector field in arbitrary
pseudo-Euclidean space $\E^m_s$ is given in \cite{Chen-KJM} and \cite{Fu-Hou}. A survey on classical and
recent results concerning submanifolds with parallel mean curvature vector in Riemannian manifolds
as well as in pseudo-Riemannian manifolds is presented in \cite{Chen-survey}.

A natural extension of the class of surfaces with parallel mean curvature vector field are surfaces with parallel
normalized mean curvature vector field.
A surface $M$ in a Riemannian manifold is said to have parallel
normalized mean curvature vector field  if the mean curvature vector $H$ is non-zero and the
unit vector in the direction of the mean curvature vector is parallel in the normal
bundle  \cite{Chen-MM}.
The condition to have parallel normalized mean curvature vector field is  weaker than the condition
to have parallel mean curvature vector field.
It is known that every surface in the Euclidean 3-space has parallel normalized mean
curvature vector field but in the 4-dimensional Euclidean space, there exist abundant examples of surfaces
which lie fully in $\E^4$ with parallel normalized mean
curvature vector field, but not with parallel mean curvature vector field.

In \cite{Chen-MM} it is proved that every analytic surface  with parallel normalized mean curvature vector
in the Euclidean space $\E^m$ must either lie in a 4-dimensional space $\E^4$ or in a hypersphere of $\E^m$
as a minimal surface. Spacelike submanifolds with parallel normalized mean curvature vector field in a
general de Sitter space are studied in \cite{Shu}. It is shown that compact spacelike submanifolds whose
mean curvature does not vanish and whose corresponding normalized vector field is parallel, must be, under
some suitable geometric assumptions, totally umbilical.

In the present paper we study the local theory of Lorentz surfaces with parallel normalized mean curvature
vector field in the pseudo-Euclidean space $\E^4_2$.
Our approach to the study of these surfaces is based on the introduction of canonical parameters.

In Section \ref{S:Weingarten} we develop an invariant theory of Lorentz surfaces in $\E^4_2$ similarly
to the theory of surfaces in the Euclidean space $\E^4$ and the theory of spacelike surfaces in
the Minkowski space $\E^4_1$.
We introduce an invariant linear map $\gamma$ of Weingarten-type  in the tangent plane at
any point of the surface, which generates two invariant functions $k = \det \gamma$ and
$\varkappa= -\ds{ \frac{1}{2}}\, \tr \gamma$.
In the case $\varkappa^2 - k > 0$ we introduce principal lines and a
geometrically determined moving frame field at each point of the
surface. Writing derivative formulas of Frenet-type for this frame
field, we obtain a system of geometric  functions  and prove a fundamental existence and
uniqueness theorem, stating that these functions  determine the surface up to a rigid motion  in $\E^4_2$.

The basic geometric classes of surfaces in $\E^4_2$ such as quasi-minimal surfaces, surfaces with
flat normal connection, surfaces with constant Gauss curvature or constant normal curvature, surfaces
with parallel mean curvature vector field, etc., are characterized by conditions on their geometric functions.

We focus our attention on the class of surfaces with parallel normalized mean curvature vector field.
We introduce canonical parameters on each such surface that allow us to formulate the fundamental
existence and uniqueness theorem in terms of three invariant functions. Our main result states that
any Lorentz surface with parallel normalized mean curvature vector field is determined up to a rigid
motion in $\E^4_2$   by three invariant functions satisfying a system of three natural partial differential
equations (Theorem \ref{T:Fundamental Theorem-canonical}). This theorem solves the Lund-Regge problem for
the class of surfaces with parallel normalized mean curvature vector field in $\E^4_2$.

\section{Preliminaries}

We consider the  pseudo-Euclidean 4-dimensional space $\E^4_2$ endowed with the canonical pseudo-Euclidean
metric of index 2 given in local coordinates by $$g_0 = dx_1^2 + dx_2^2 - dx_3^2 - dx_4^2,$$
where $(x_1, x_2, x_3, x_4)$ is a rectangular coordinate system of $\E^4_2$. As usual, we denote by
$\langle \, , \rangle$ the indefinite inner scalar product with respect to $g_0$.

 A vector $v$ is said to be  \emph{spacelike} (respectively, \emph{timelike}) if $\langle v, v \rangle > 0$
 (respectively, $\langle v, v \rangle < 0$).
 A vector $v$ is called \emph{lightlike} if it is nonzero and satisfies $\langle v, v \rangle = 0$.

A surface $M^2_1$ in $\E^4_2$ is called \emph{Lorentz}  if the
induced  metric $g$ on $M^2_1$ is Lorentzian. So, at each point $p\in M^2_1$ we have the following decomposition
$$\E^4_2 = T_pM^2_1 \oplus N_pM^2_1$$
with the property that the restriction of the metric
onto the tangent space $T_pM^2_1$ is of
signature $(1,1)$, and the restriction of the metric onto the normal space $N_pM^2_1$ is of signature $(1,1)$.

We denote by $\nabla$ and $\nabla'$ the Levi Civita connections of $M^2_1$  and $\E^4_2$, respectively.
For vector fields $x$, $y$  tangent to $M^2_1$ and a vector field $\xi$ normal to $M^2_1$, the formulas
of Gauss and Weingarten, giving a decomposition of the vector fields $\nabla'_xy$ and
$\nabla'_x \xi$ into tangent and  normal components, are given respectively by \cite{Chen1}:
$$\begin{array}{l}
\vspace{2mm}
\nabla'_xy = \nabla_xy + \sigma(x,y);\\
\vspace{2mm}
\nabla'_x \xi = - A_{\xi} x + D_x \xi.
\end{array}$$
These formulas define the second fundamental form $\sigma$, the normal
connection $D$, and the shape operator $A_{\xi}$ with respect to
$\xi$.
For each normal vector field $\xi$, the shape operator $A_{\xi}$ is a symmetric endomorphism of the
tangent space $T_pM^2_1$ at $p \in M^2_1$.
  In general, $A_{\xi}$ is not diagonalizable.
It is well known that the shape operator and the second fundamental form are related by the formula
$$\langle \sigma(x,y), \xi \rangle = \langle A_{\xi} x, y \rangle$$
for $x$, $y$ tangent to $M^2_1$ and $\xi$ normal to $M^2_1$.

The mean curvature vector  field $H$ of $M^2_1$ in $\E^4_2$
is defined as $H = \ds{\frac{1}{2}\,  \tr\, \sigma}$.
The  surface $M^2_1$  is called \emph{minimal} if its mean curvature vector vanishes identically, i.e. $H =0$.
A natural extension of minimal surfaces are quasi-minimal surfaces.
The surface $M^2_1$  is called \emph{quasi-minimal} (or \textit{pseudo-minimal}) if its
mean curvature vector is lightlike at each point, i.e. $H \neq 0$ and $\langle H, H \rangle =0$  \cite{Rosca}.
Obviously, quasi-minimal surfaces are always non-minimal.

A normal vector field $\xi$ on $M^2_1$ is called \emph{parallel in the normal bundle}
(or simply \emph{parallel}) if $D{\xi}=0$ holds identically \cite{Chen2}.
The surface $M^2_1$ is said to have \emph{parallel mean curvature vector field} if its mean curvature vector $H$
satisfies $D H =0$ identically.

Surfaces for which the mean curvature vector $H$ is non-zero and  there exists a parallel unit vector field
$b$ in the direction of the mean curvature vector
 $H$, such that $b$ is parallel in the normal
bundle, are called surfaces with \textit{parallel normalized mean curvature vector field} \cite{Chen-MM}.
It is easy to see  that if $M^2_1$ is a surface  with non-zero parallel mean curvature vector field $H$
(i.e. $DH = 0$), then $M^2_1$ is a surface with parallel normalized mean curvature vector field,
but the converse is not true in general.
It is true only in the case $\Vert H \Vert = const$.

A submanifold  $M^2_1$  of a pseudo-Riemannian manifold is called \emph{totally geodesic} if the second
fundamental form $\sigma$ of  $M^2_1$
vanishes identically. It is called \emph{totally umbilical} if its second fundamental form satisfies
$\sigma(x,y) = \langle x, y \rangle \,H$
for arbitrary vector fields $x$, $y$  tangent to $M^2_1$.

\section{Weingarten map of a Lorentz surface in $\E^4_2$} \label{S:Weingarten}

Let $M^2_1: z=z(u,v), \,\, (u,v) \in \mathcal{D}$ $(\mathcal{D} \subset \R^2)$  be a local parametrization
on a Lorentz surface in $\E_{2}^{4}$.
The tangent
space $T_pM^{2}$ at an  arbitrary point $p=z(u,v)$ of $M^{2}$ is spanned
by the vector fields $z_u$ and $z_v$. We assume that $\langle z_u,z_u\rangle >0$
and $\langle z_v,z_v\rangle <0$. Hence the coefficients
of the first fundamental form of $M^2_1$ are
$E=\langle z_u,z_u\rangle$, $F=\langle z_u,z_v \rangle$,
$G=\langle z_v,z_v\rangle$, where $E>0$ and $G<0$. We denote $W = \sqrt{|EG - F^2|}$.

We consider a normal frame field $\{n_{1},n_{2}\}$ satisfying
the conditions $\left\langle n_{1},n_{1}\right\rangle =1$, $\left\langle n_{1},n_{2}\right\rangle =0$,
$\left\langle n_{2},n_{2}\right\rangle =-1$.
Then we
have the following derivative formulas:
\begin{equation} \label{E:eq0}
\begin{array}{l}
\vspace{2mm}\nabla_{z_{u}}^{'}z_{u}=z_{uu}=\Gamma_{11}^{1}z_{u}-\Gamma_{11}^{2}z_{v}+c_{11}^{1}n_{1}-c_{11}^{2}n_{2}\\
\vspace{2mm}\nabla_{z_{u}}^{'}z_{v}=z_{uv}=\Gamma_{12}^{1}z_{u}-\Gamma_{12}^{2}z_{v}+c_{12}^{1}n_{1}-c_{12}^{2}n_{2}\\
\vspace{2mm}\nabla_{z_{v}}^{'}z_{v}=z_{vv}=\Gamma_{22}^{1}z_{u}-\Gamma_{22}^{2}z_{v}+c_{22}^{1}n_{1}-c_{22}^{2}n_{2},
\end{array}
\end{equation}
where $\Gamma_{ij}^{k}$ are the Christoffel's symbols and the functions $c_{ij}^k, \,\, i,j,k = 1,2$ are given by

$$\begin{array}{ll}
\vspace{2mm}
c_{11}^{1}= \langle z{}_{uu},n{}_{1} \rangle; & c_{11}^{2}= \langle z{}_{uu},n{}_{2} \rangle; \\
\vspace{2mm}
c_{12}^{1}= \langle z{}_{uv},n{}_{1} \rangle;& c_{12}^{2}= \langle z{}_{uv},n{}_{2} \rangle;\\
\vspace{2mm}
c_{22}^{1}= \langle z{}_{vv},n{}_{1} \rangle;& c_{22}^{2}= \langle z{}_{vv},n{}_{2} \rangle.
\end{array}$$
\vskip 2mm

Obviously, if  $c_{ij}^k=0, \; i,j,k = 1, 2$ then
$M^2_1$ is totally geodesic and hence,  the surface $M^2_1$ is an open part of a pseudo-Euclidean
linear subspace of $\E^4_2$, i.e. $M^2_1$ is part of a 2-plane. So,
we assume that at least one of the coefficients $c_{ij}^k$ is not
zero.

It follows from \eqref{E:eq0} that
\begin{equation}\label{E:eq1}
\begin{array}{l}
\vspace{2mm}
\sigma\left(z_{u},z_{u}\right)=c_{11}^{1}n_{1}-c_{11}^{2}n_{2};\\
\vspace{2mm}
\sigma\left(z_{u},z_{v}\right)=c_{12}^{1}n_{1}-c_{12}^{2}n_{2};\\
\vspace{2mm}
\sigma\left(z_{v},z_{v}\right)=c_{22}^{1}n_{1}-c_{22}^{2}n_{2}.
\end{array}
\end{equation}

\vskip 2mm
Using $c_{ij}^{k}$ we introduce the following functions:

$$\Delta_1 = \left|%
\begin{array}{cc}
\vspace{2mm}
  c_{11}^1 & c_{12}^1 \\
  c_{11}^2 & c_{12}^2 \\
\end{array}%
\right|; \quad
\Delta_2 = \left|%
\begin{array}{cc}
\vspace{2mm}
  c_{11}^1 & c_{22}^1 \\
  c_{11}^2 & c_{22}^2 \\
\end{array}%
\right|; \quad
\Delta_3 = \left|%
\begin{array}{cc}
\vspace{2mm}
  c_{12}^1 & c_{22}^1 \\
  c_{12}^2 & c_{22}^2 \\
\end{array}%
\right|;$$

$$ \ds{L(u,v)=\frac{2\Delta_{1}}{W}}; \qquad \ds{M(u,v)=\frac{\Delta_{2}}{W}}; \qquad
\ds{N(u,v)=\frac{2\Delta_{3}}{W}}.$$
\vskip 2mm
Let
$$\begin{array}{l}
\vspace{2mm}u=u(\bar{u},\bar{v});\\
\vspace{2mm}v=v(\bar{u},\bar{v}),
\end{array}\quad(\bar{u},\bar{v})\in\bar{\mathcal{D}},\,\,\bar{\mathcal{D}}\subset\R^{2},$$
be a smooth change of  the parameters $(u, v)$ on $M^{2}$
with $J=u_{\bar{u}}\,v_{\bar{v}}-u_{\bar{v}}\,v_{\bar{u}}\neq0$. Then
\vskip 2mm

\begin{equation}\label{E:eq2}
\begin{array}{l}
\vspace{2mm}z_{\bar{u}}=z_{u}\,u_{\bar{u}}+z_{v}\,v_{\bar{u}},\\
\vspace{2mm}z_{\bar{v}}=z_{u}\,u_{\bar{v}}+z_{v}\,v_{\bar{v}}.
\end{array}
\end{equation}

If $\overline{E}=\langle z_{\bar{u}},z_{\bar{u}}\rangle$, $\overline{F}=\langle z_{\bar{u}},z_{\bar{v}}\rangle$
and $\overline{G}=\langle z_{\bar{v}},z_{\bar{v}}\rangle$, then we get
$$\begin{array}{l}
\vspace{2mm}\overline{E}=u_{\bar{u}}^{2}\,E+2\,u_{\bar{u}}v_{\bar{u}}\,F+v_{\bar{u}}^{2}\,G,\\
\vspace{2mm}\overline{F}=u_{\bar{u}}u_{\bar{v}}\,E+(u_{\bar{u}}v_{\bar{v}}+v_{\bar{u}}u_{\bar{v}})\,F+v_{\bar{u}}v_{\bar{v}}\,G,\\
\vspace{2mm}\overline{G}=u_{\bar{v}}^{2}\,E+2\,u_{\bar{v}}v_{\bar{v}}\,F+v_{\bar{v}}^{2}\,G
\end{array}$$
and  hence, $\overline{E}\,\overline{G}-\overline{F}^{2}=J^{2}\,(EG-F^{2})$,
$\overline{W}=\varepsilon J\,W$, where $\varepsilon={\rm sign}\,J$.

Further we calculate the functions $\bar{c}_{ij}^{k}$ which take part in the following equalities:
\begin{equation}\label{E:eq2-a}
\begin{array}{l}
\vspace{2mm}\sigma(z_{\bar{u}},z_{\bar{u}})=\bar{c}_{11}^{1}\,n_{1}-\bar{c}_{11}^{2}\,n_{2};\\
\vspace{2mm}\sigma(z_{\bar{u}},z_{\bar{v}})=\bar{c}_{12}^{1}\,n_{1}-\bar{c}_{12}^{2}\,n_{2};\\
\vspace{2mm}\sigma(z_{\bar{v}},z_{\bar{v}})=\bar{c}_{22}^{1}\,n_{1}-\bar{c}_{22}^{2}\,n_{2}.
\end{array}
\end{equation}

Using  \eqref{E:eq1}, \eqref{E:eq2}, and \eqref{E:eq2-a} we find
$$\begin{array}{l}
\vspace{2mm}\bar{c}_{11}^{k}=u_{\bar{u}}^{2}\,c_{11}^{k}+
2u_{\bar{u}}\,v_{\bar{u}}\,c_{12}^{k}+v_{\bar{u}}^{2}\,c_{22}^{k},\\
\vspace{2mm}\bar{c}_{12}^{k}=u_{\bar{u}}\,u_{\bar{v}}\,c_{11}^{k}+
(u_{\bar{u}}\,v_{\bar{v}}+u_{\bar{v}}\,v_{\bar{u}})\,c_{12}^{k}+v_{\bar{u}}\,v_{\bar{v}}\,c_{22}^{k},\\
\vspace{2mm}\bar{c}_{22}^{k}=u_{\bar{v}}^{2}\,c_{11}^{k}+2u_{\bar{v}}\,v_{\bar{v}}\,c_{12}^{k}+
v_{\bar{v}}^{2}\,c_{22}^{k}.
\end{array}\quad\quad(k=1,2),$$
and hence
$$\begin{array}{l}
\vspace{2mm}\bar{\Delta}_{1}=J\left(u_{\bar{u}}^{2}\,\Delta_{1}+u_{\bar{u}}\,v_{\bar{u}}\,\Delta_{2}+
v_{\bar{u}}^{2}\,\Delta_{3}\right);\\
\vspace{2mm}\bar{\Delta}_{2}=J\left(2u_{\bar{u}}\,u_{\bar{v}}\,\Delta_{1}+(u_{\bar{u}}\,v_{\bar{v}}+
u_{\bar{v}}\,v_{\bar{u}})\,\Delta_{2}+2v_{\bar{u}}\,v_{\bar{v}}\,\Delta_{3}\right);\\
\vspace{2mm}\bar{\Delta}_{3}=J\left(u_{\bar{v}}^{2}\,\Delta_{1}+u_{\bar{v}}\,v_{\bar{v}}\,\Delta_{2}+
v_{\bar{v}}^{2}\,\Delta_{3}\right).
\end{array}$$

Thus we find that the functions $\overline{L}$, $\overline{M}$, $\overline{N}$
are expressed as follows:
$$\begin{array}{l}
\vspace{2mm}\overline{L}=\varepsilon(u_{\bar{u}}^{2}\,L+2\,u_{\bar{u}}v_{\bar{u}}\,M+v_{\bar{u}}^{2}\,N),\\
\vspace{2mm}\overline{M}=\varepsilon(u_{\bar{u}}u_{\bar{v}}\,L+(u_{\bar{u}}v_{\bar{v}}+
v_{\bar{u}}u_{\bar{v}})\,M+v_{\bar{u}}v_{\bar{v}}\,N),\\
\vspace{2mm}\overline{N}=\varepsilon(u_{\bar{v}}^{2}\,L+2\,u_{\bar{v}}v_{\bar{v}}\,M+v_{\bar{v}}^{2}\,N).
\end{array}$$

Hence, the functions $L,M,N$ change in the same way as the coefficients
of the first fundamental form $E,F,G$ under any change of the parameters
on $M^2_1$.

If we take a vector field $X\in T_pM^2_1$, such that $X=\lambda z_{u}+\mu z_{v}=
\bar{\lambda}z_{\bar{u}}+\bar{\mu}z_{\bar{v}}$,
then $\lambda=u_{\bar{u}}\bar{\lambda}+u_{\bar{v}}\bar{\mu},\,\,\mu=
v_{\bar{u}}\bar{\lambda}+v_{\bar{v}}\bar{\mu}$.
So, for each  two tangent vector fields $X_1 =\lambda_1 z_{u}+\mu_1 z_{v}$ and $X_2=\lambda_2 z_{u}+\mu_2 z_{v}$
we can consider the quadratic form with coefficients $L, M, N$ and we have the following equality:

$$\ds{\overline{L}\bar{\lambda}_{1}\bar{\lambda}_{2}+\overline{M}(\bar{\lambda}_{1}\bar{\mu}_{2}+
\bar{\mu}_{1}\bar{\lambda}_{2})+\overline{N}\bar{\mu}_{1}\bar{\mu}_{2}=
\varepsilon\left(L\lambda_{1}\lambda_{2}+M(\lambda_{1}\mu_{2}+\mu_{1}\lambda_{2})+N\mu_{1}\mu_{2}\right)}.$$
\vskip 2mm

The last formula allows us to define second fundamental form
$II$ of the surface $M^2_1$ at $p\in M^2_1$ as follows. Let $X=\lambda z_{u}+\mu z_{v},\,\,(\lambda,\mu)\neq(0,0)$
be a tangent vector at a point $p\in M^2_1$. Then

$$II(\lambda,\mu)=L\lambda^{2}+2M\lambda\mu+N\mu^{2},\quad\lambda,\mu\in{\R}.$$

\vskip 2mm
Further we will show that the functions $L, M, N$ do not depend on the choice of the normal frame
of the surface.  Let
$\{\widetilde{n}_1, \widetilde{n}_2\}$ be another normal frame
field of $M^2_1$, such that
$\langle \widetilde{n}_1, \widetilde{n}_1 \rangle = 1$, $\langle \widetilde{n}_1,
\widetilde{n}_2 \rangle = 0$, $\langle \widetilde{n}_2, \widetilde{n}_2 \rangle = -1$. The relation between
the two normal frame fields is given by
$$\begin{array}{l}
\vspace{2mm}n_{1}=\varepsilon'(\cosh\theta\,\widetilde{n}_{1}+\sinh\theta\,\widetilde{n}_{2});\\
\vspace{2mm}n_{2}=\varepsilon'(\sinh\theta\,\widetilde{n}_{1}+\cosh\theta\,\widetilde{n}_{2}),
\end{array}\qquad\varepsilon'=\pm1$$
for some smooth function $\theta$, and the relation between the corresponding functions $c_{ij}^{k}$ and
$\widetilde{c}_{ij}^{k}$, $i,j,k=1,2$ is
$$\begin{array}{l}
\vspace{2mm}\widetilde{c}_{ij}^{1}=\varepsilon'(\cosh\theta\,c_{ij}^{1}-\sinh\theta\,c_{ij}^{2});\\
\vspace{2mm}\widetilde{c}_{ij}^{2}=\varepsilon'(-\sinh\theta\,c_{ij}^{1}+\cosh\theta\,c_{ij}^{2}),
\end{array}\quad i,j=1,2.$$
Thus, $\widetilde{\Delta}_{i}=\Delta_{i}$, $i=1,2,3$, and $\widetilde{L}=L,\;\widetilde{M}=M,\;\widetilde{N}=N$.
So, the functions $L$, $M$, $N$ do not depend on the normal frame
of the surface.

Hence, the second fundamental form $II$ is invariant
up to the orientation of the tangent space or the normal space of
the surface.

\vskip 1mm
Such a bilinear  form has been considered for an arbitrary
2-dimensional surface in a 4-dimensional affine space
$\mathbb{A}^4$ (see for example \cite{Bur-Mayer,Lane,Walter}). Here we use
this form for Lorentz surfaces
in $\E^4_2$ and taking into consideration also the first fundamental form
we develop the theory of Lorentz surfaces similar to the theory of surfaces in $\E^4$ and $\E^4_1$.

It follows from  formulas \eqref{E:eq1}  that
the condition  $L(u,v)= M(u,v)=N(u,v) = 0$,
$(u,v) \in \mathcal D$ characterizes points at which
the space $\{\sigma(x,y):  x, y \in T_pM^2_1\}$  is one-dimensional.
We call such points  \emph{flat points} of the surface since they are analogous to flat points
in the theory of surfaces in $\R^3$.
In \cite{Lane} and \cite{Little} such points are called inflection points.
E. Lane has shown that every point of a surface is an inflection point
if and only if the surface is either developable or lies in a 3-dimensional space  \cite{Lane}.
So, further we consider surfaces free of flat points, i.e. we assume that $(L, M, N) \neq (0,0,0)$.

\vskip 2mm
The second fundamental form $II$ determines a map of Weingarten-type
$\gamma:T_{p}M^2_1\rightarrow T_{p}M^2_1$ at any point of $M^2_1$
in the standard way:

$$\begin{array}{l}
\vspace{2mm}\gamma(z_{u})=\gamma_{1}^{1}z_{u}+\gamma_{1}^{2}z_{v},\\
\vspace{2mm}\gamma(z_{v})=\gamma_{2}^{1}z_{u}+\gamma_{2}^{2}z_{v},
\end{array}$$
where
$${\displaystyle {\gamma_{1}^{1}=\frac{FM-GL}{EG-F^{2}},\quad\gamma_{1}^{2}=
\frac{FL-EM}{EG-F^{2}}},\quad{\displaystyle {\gamma_{2}^{1}=\frac{FN-GM}{EG-F^{2}},\quad\gamma_{2}^{2}=
\frac{FM-EN}{EG-F^{2}}}.}}$$
The linear map $\gamma$ is invariant under changes of the parameters
of the surface and changes of the normal frame field.  In general, $\gamma$ is not diagonalizable.

As in the classical differential geometry of surfaces the map $\gamma$ generates the following invariants:

$$k:=\det\gamma=\frac{LN-M^{2}}{EG-F^{2}},\qquad\varkappa:=-\frac{1}{2}\,{\rm tr}\,\gamma=
\frac{EN+GL-2FM}{2(EG-F^{2})}$$
\vskip 2mm
The functions $k$ and $\varkappa$ are invariant under changes of the parameters of the surface and
changes of the normal frame field.  Next we shall prove the following:

\begin{prop}
The function $\varkappa$ is the curvature
of the normal connection of the surface $M^2_1$.
\end{prop}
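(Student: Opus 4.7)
The plan is to compute the curvature of the normal connection directly via the Ricci equation and identify the resulting scalar function with $\varkappa=-\tfrac{1}{2}\tr\gamma$. The key structural fact is that in a pseudo-orthonormal normal frame $\{n_1,n_2\}$ (with $\langle n_1,n_1\rangle=1$ and $\langle n_2,n_2\rangle=-1$), the connection $1$-form of $D$ reduces to a single scalar $1$-form $\omega$, so the curvature of the normal connection is encoded by $d\omega$ alone.

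More precisely, because $D$ preserves the indefinite inner product and the frame is pseudo-orthonormal, one has $D_X n_1=\omega(X)\,n_2$ and $D_X n_2=\omega(X)\,n_1$ for some $1$-form $\omega$ on $M^2_1$. A direct computation then gives $R^D(X,Y)n_1=d\omega(X,Y)\,n_2$ (the $\omega\wedge\omega$ contribution vanishes), and the curvature of the normal connection $\varkappa_N$ is defined by $d\omega=\varkappa_N\,dA$, where $dA=W\,du\wedge dv$ is the area element of $M^2_1$.

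Next, since $\E^4_2$ is flat, the Ricci equation yields
\[
\langle R^D(z_u,z_v)n_1,n_2\rangle=\langle [A_{n_1},A_{n_2}]z_u,z_v\rangle.
\]
The left-hand side equals $-d\omega(z_u,z_v)=-\varkappa_N W$, because $\langle n_2,n_2\rangle=-1$. For the right-hand side I would express the shape operators in the basis $\{z_u,z_v\}$ as $G_I^{-1}\mathcal{A}^k$, where $G_I$ is the matrix of the first fundamental form and $\mathcal{A}^k=(c_{ij}^k)_{i,j=1,2}$; this follows from \eqref{E:eq1} together with the identity $\langle A_\xi X,Y\rangle=\langle \sigma(X,Y),\xi\rangle$. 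A short matrix computation of the $(u,v)$-entry of the antisymmetric bilinear form $\langle [A_{n_1},A_{n_2}]\cdot,\cdot\rangle$ gives
\[
\langle [A_{n_1},A_{n_2}]z_u,z_v\rangle=-\frac{G\Delta_1-F\Delta_2+E\Delta_3}{EG-F^{2}},
\]
and substituting $\Delta_1=WL/2$, $\Delta_2=WM$, $\Delta_3=WN/2$ turns this into $-W\varkappa$. Comparing the two sides of the Ricci equation yields $\varkappa_N=\varkappa$, as desired.

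The main technical obstacle is the bookkeeping of signs: the factor $-1$ from $\langle n_2,n_2\rangle=-1$, the negativity of $EG-F^2$ in the Lorentz case (so $EG-F^2=-W^2$), the built-in antisymmetry of the commutator of two self-adjoint operators, and the orientation convention for the area form all interact and must be propagated consistently. Once the signs are handled, the remainder is direct $2\times 2$ matrix algebra using the explicit formulas for $L,M,N$ in terms of $\Delta_1,\Delta_2,\Delta_3$.
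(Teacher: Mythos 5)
Your proof is correct and takes essentially the same route as the paper: both sides derive the Ricci equation from the flatness of $\E^4_2$ and reduce the normal curvature to $\langle [A_{n_1},A_{n_2}]z_u,z_v\rangle$, which is then computed from the $c_{ij}^k$, recognized via the $\Delta_i$ as $-W\varkappa$, and matched against the curvature of $D$. The only cosmetic differences are that you keep $F$ general and encode $D$ in a single connection $1$-form $\omega$ with $d\omega=\varkappa_N\,dA$, whereas the paper normalizes $F=0$ and evaluates the commutator directly on the orthonormal tangent frame.
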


\noindent \emph{Proof:}
The curvature tensor $R^{\bot}$ of the normal connection $D$ is
given by
$$R^{\bot}(x,y)n=D_{x}D_{y}n-D_{y}D_{x}n-D_{[x,y]}n,$$
where $x$, $y$ are tangent vector fields and $n$ is a normal vector field of $M^2_1$.
The curvature of the normal connection at a point $p\in M^2_1$ is
defined by $\langle R^{\bot}(x,y)n_{2},n_{1}\rangle,$ where $\{x,y,n_{1},n_{2}\}$
is a right oriented orthonormal quadruple.

Without loss of generality we assume that $F = 0$ and take $x, y$ to be the unit vector fields
in the direction of  $z_u$ and $z_v$, i.e. $x=\ds{\frac{z_u}{\sqrt{E}}}$, $y=\ds{\frac{z_v}{\sqrt{-G}}}$.
Let $\{n_{1},n_{2}\}$ be a normal frame field  such that
 $\left\langle n_{1},n_{1}\right\rangle =1$, $\left\langle n_{1},n_{2}\right\rangle =0$,
$\left\langle n_{2},n_{2}\right\rangle =-1$.
Denote by $A_1$ (resp. $A_2$) the shape operator  corresponding to $n_1$ (resp. $n_2$).

Since the curvature tensor $R'$ of the connection $\nabla'$ is zero,
we have
$$\nabla'_{x}\nabla'_{y}n_{1}-\nabla'_{y}\nabla'_{x}n_{1}-\nabla'_{[x,y]}n_{1}=0.$$
Therefore the tangent component and the normal component of $R'(x,y)n_{1}$
are both zero. The normal component is $D_{x}D_{y}n_{1}-D_{y}D_{x}n_{1}-D_{[x,y]}n_{1}-
\sigma(x,A_{1}(y))+\sigma(y,A_{1}(x)).$
Hence,
$$D_{x}D_{y}n_{1}-D_{y}D_{x}n_{1}-D_{[x,y]}n_{1}=\sigma(x,A_{1}(y))-\sigma(y,A_{1}(x)).$$
The left-hand side of the last equality is $R^{\bot}(x,y)n_{1}$. Then
\begin{equation}\label{E:Eq2}
\langle R^{\bot}(x,y)n_{1},n_{2}\rangle=\langle\sigma(x,A_{1}(y)),n_{2}\rangle-\langle\sigma(y,A_{1}(x)),n_{2}\rangle=
\langle(A_{2}\circ A_{1}-A_{1}\circ A_{2})(y),x\rangle.
\end{equation}

Further we consider the operator $A_{2}\circ A_{1}-A_{1}\circ A_{2}$.
For the second fundamental tensor $\sigma$ we have:
\begin{equation} \label{E:Eq2-b}
\begin{array}{l}
\vspace{2mm}
\sigma(x,x)=\displaystyle{\frac{c_{11}^{1}}{E}\;\;n_{1}\;-\;\frac{c_{11}^{2}}{E}\;\;n_{2}},\\
\vspace{2mm}\sigma(x,y)=\displaystyle{\frac{c_{12}^{1}}{\sqrt{-EG}}\;n_{1}-\frac{c_{12}^{2}}{\sqrt{-EG}}\;n_{2}}\\
\vspace{2mm}\sigma(y,y)=\displaystyle{-\frac{c_{22}^{1}}{G}\;\;n_{1}\;+\;\;\frac{c_{22}^{2}}{G}\;\;n_{2}}
\end{array}
\end{equation}

Using the last formulas we calculate the vector fields $A_1(x)$, $A_1(y)$, $A_2(x)$, $A_2(y)$ and obtain:
\begin{equation} \label{E:Eq3}
\begin{array}{ll}
\vspace{2mm}A_{1}(x)={\displaystyle {\frac{c_{11}^{1}}{E}\;x\;-\;\frac{c_{12}^{1}}{\sqrt{-EG}}\;y},\qquad} & A_{2}(x)=
{\displaystyle {\frac{c_{11}^{2}}{E}\;x\;-\;\frac{c_{12}^{2}}{\sqrt{-EG}}\;y},}\\
\vspace{2mm}A_{1}(y)={\displaystyle {\frac{c_{12}^{1}}{\sqrt{-EG}}\;x+\frac{c_{22}^{1}}{G}\;y,}\qquad} & A_{2}(y)=
{\displaystyle {\frac{c_{12}^{2}}{\sqrt{-EG}}\;x+\frac{c_{22}^{2}}{G}\;y.}}
\end{array}
\end{equation}
Taking in mind \eqref{E:Eq3} we get:
$$\begin{array}{ll}
\vspace{4mm}(A_{2}\circ A_{1}-A_{1}\circ A_{2})(x)=
\ds{-\left(\frac{c_{11}^{1}c_{12}^{2}-c_{11}^{2}c_{12}^{1}}{E\sqrt{-EG}}+
\frac{c_{12}^{1}c_{22}^{2}-c_{12}^{2}c_{22}^{1}}{G\sqrt{-EG}}\right)\, y}=\ds{-\frac{EN+GL}{2EG}\,y;}\\
\vspace{2mm}(A_{2}\circ A_{1}-A_{1}\circ A_{2})(y)=\ds{-\left(\frac{c_{11}^{1}c_{12}^{2}-c_{11}^{2}c_{12}^{1}}
{E\sqrt{-EG}}+\frac{c_{12}^{1}c_{22}^{2}-c_{12}^{2}c_{22}^{1}}{G\sqrt{-EG}}\right) \,x}=-\ds{\frac{EN+GL}{2EG}\,x.}
\end{array}$$
Hence
$$\begin{array}{l}
\vspace{2mm}(A_{2}\circ A_{1}-A_{1}\circ A_{2})(x) = -\varkappa \,y;\\
\vspace{2mm}(A_{2}\circ A_{1}-A_{1}\circ A_{2})(y) = -\varkappa \,x.
\end{array}$$
The last equalities imply that:
\begin{equation}\label{E:Eq4}
\begin{array}{l}
\vspace{2mm}
\langle(A_{2}\circ A_{1}-A_{1}\circ A_{2})(x),y\rangle = \varkappa;\\
\vspace{2mm}
\langle(A_{2}\circ A_{1}-A_{1}\circ A_{2})(y),x\rangle = -\varkappa.
\end{array}
\end{equation}

Note that $A_2 \circ A_1 - A_1 \circ A_2$ is an invariant
skew-symmetric operator in the tangent space, i.e. it does not
depend on the choice of the orthonormal tangent frame field
$\{x,y\}$.

Finally, from \eqref{E:Eq2} and \eqref{E:Eq4} we get that
$\langle R^{\bot}(x,y)n_{1},n_{2}\rangle = -\varkappa$.
Consequently,
$$\langle R^{\bot}(x,y)n_{2},n_{1}\rangle = \varkappa.$$
\qed

\vskip 3mm
The second fundamental form determines  conjugate tangents at a point $p$ of $M^2_1$
in the same way as in the classical differential geometry of surfaces.

\begin{defn} \label{D:conjugate}
Two tangents $g_1: X_1 = \lambda_1 z_u + \mu_1 z_v$ and $g_2: X_2
= \lambda_2 z_u + \mu_2 z_v$ are said to be \emph{conjugate
tangents},   if
$$L\lambda_1 \lambda_2 + M (\lambda_1 \mu_2 +\lambda_2 \mu_1) + N\mu_1 \mu_2 = 0.$$
\end{defn}

Asymptotic tangents and principal tangents can be defined in the standard way:

\begin{defn} \label{D:asymptotic tangent}
A tangent $g: X = \lambda z_u + \mu z_v$ is said to be
\emph{asymptotic}, if it is self-conjugate.
\end{defn}

\begin{defn} \label{D:principal tangent}
A tangent $g: X = \lambda z_u + \mu z_v$ is said to be
\emph{principal}, if it is perpendicular to its conjugate.
\end{defn}

The equation of the asymptotic tangents at a point $p
\in M^2_1$ is
$$L\lambda^2 + 2M \lambda \mu + N\mu^2 = 0.$$

The equation of the principal tangents at a point $p
\in M^2_1$ is
\begin{equation} \label{E:eq2-aa}
(EM-FL) \lambda^2 + (EN-GL) \lambda \mu + (FN-GM) \mu^2 = 0.
\end{equation}

A line $c: u=u(q), \; v=v(q); \; q\in J \subset \R$ on $M^2_1$ is
said to be an \emph{asymptotic line}, respectively a
\textit{principal line}, if its tangent at any point is
asymptotic, respectively  principal.

In the theory of surfaces in $\E^4$ and the theory of spacelike surfaces in $\E^4_1$  equation
\eqref{E:eq2-aa} always has solutions
since the discriminant of \eqref{E:eq2-aa} is greater or equal to zero. So, at each point of
a surface in $\E^4$ (or a spacelike surface in $\E^4_1$)
there exist principal tangents.
For a Lorentz surface in $\E^4_2$ the existence of solutions of equation \eqref{E:eq2-aa} depends on
the sign of the invariant $\varkappa^2 - k$.
Indeed, it can easily be seen that the discriminant $D$ of \eqref{E:eq2-aa} is expressed as
$$D = 4(EG - F^2)^2 (\varkappa^2 - k).$$

In the case $\varkappa^2 - k > 0$ at each point of the surface there exist two principal tangents.
This case corresponds to the case when the  map  $\gamma$ is diagonalizable.
If   $\varkappa^2 - k > 0$ we can assume that the parametric lines of the surface are principal.
It is clear that  $M^2_1$ is parameterized by principal lines if and only if $F=0, \,\, M=0.$

 Further we study Lorentz surfaces in $\E^4_2$ for which $\varkappa^2 - k > 0$ at each point.

\section{Lorentz surfaces free of flat points} \label{S:Fund-Theorem}

\subsection{Minimal Lorentz surfaces}
Let $M^2_1$  be a Lorentz surface free of flat points, i.e. $(L, M, N) \neq (0,0,0)$.
Using the terminology from the classical differential geometry of surfaces, we call a point
$p \in M^2_1$ \textit{umbilical}
if the coefficients of the first and the second fundamental forms at $p$  are proportional,
i.e. $L = \rho \,E, \; M = \rho\, F, \; N = \rho\, G$ for some $\rho \in \R$.

The normal mean curvature vector field of the surface is $\ds{H = \frac{1}{2}\,{\rm tr} \sigma =
\frac{1}{2}\, (\sigma(x, x) -\sigma(y,y))}$.
Recall that  $M^2_1$ is said to be \textit{minimal} if the mean curvature
vector $H =0$. We shall prove the following characterization of  minimal Lorentz surfaces in $\E^4_2$.

\vskip 2mm
\begin{prop}\label{P:minimal}
Let $M^2_1$ be a Lorentz surface in $\E^4_2$ free of flat points.
Then $M^2_1$ is minimal if and only if  $M^2_1$ consists of umbilical points.
\end{prop}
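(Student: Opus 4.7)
The plan is to work in local parameters with $F=\langle z_u,z_v\rangle=0$; such parameters exist locally on any Lorentz surface, and both conditions in the proposition are invariant under reparametrization. In these coordinates the frame $x=z_u/\sqrt{E}$, $y=z_v/\sqrt{-G}$ is orthonormal with $\langle x,x\rangle=1$, $\langle y,y\rangle=-1$, and formulas \eqref{E:Eq2-b} give, after a direct computation,
\begin{equation*}
H \;=\; \frac{1}{2EG}\bigl[(Gc_{11}^{1}+Ec_{22}^{1})\,n_{1} \;-\; (Gc_{11}^{2}+Ec_{22}^{2})\,n_{2}\bigr].
\end{equation*}
Hence $M^2_1$ is minimal at $p$ exactly when $(c_{22}^{1},c_{22}^{2})=-\tfrac{G}{E}(c_{11}^{1},c_{11}^{2})$. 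With $F=0$, the umbilical condition $L=\rho E$, $M=\rho F$, $N=\rho G$ reduces to $M=0$ together with $GL-EN=0$, which in terms of the $c_{ij}^k$ reads
\begin{equation*}
\Delta_{2}=c_{11}^{1}c_{22}^{2}-c_{11}^{2}c_{22}^{1}=0 \qquad\text{and}\qquad E\,\Delta_{3}-G\,\Delta_{1}=0.
\end{equation*}

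For the implication \emph{minimal $\Rightarrow$ umbilical}, I plan to substitute $c_{22}^{k}=-(G/E)\,c_{11}^{k}$ directly into $\Delta_{2}$ and into $E\Delta_{3}-G\Delta_{1}$; both expressions collapse by elementary cancellation, which is exactly the umbilical condition in the chosen coordinates.

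The converse direction \emph{umbilical $\Rightarrow$ minimal} is more delicate and is where the no-flat-points assumption $(L,M,N)\neq(0,0,0)$ enters. The relation $\Delta_{2}=0$ forces the vectors $(c_{11}^{1},c_{11}^{2})$ and $(c_{22}^{1},c_{22}^{2})$ to be linearly dependent. If either of them vanishes, a short check shows that all of $L,M,N$ vanish, contradicting the hypothesis. Hence both are nonzero and $(c_{22}^{1},c_{22}^{2})=t(c_{11}^{1},c_{11}^{2})$ for some $t\neq 0$. Inserting this into the second umbilical relation $E\Delta_{3}=G\Delta_{1}$ and factoring yields $(G+Et)\,\Delta_{1}=0$. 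The branch $\Delta_{1}=0$ gives $L=0$, and umbilicity then forces $\rho=0$ and hence $M=N=0$, again a flat point; so only $t=-G/E$ survives, which is precisely the minimality condition $H=0$.

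The main obstacle is the backward direction: the entire case analysis is driven by the no-flat-points hypothesis, and one must verify that every degenerate branch produces a flat point, leaving only the ratio $t=-G/E$ compatible with umbilicity. The forward direction, by contrast, is essentially a one-line substitution.
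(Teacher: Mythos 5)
Your proof is correct and follows essentially the same route as the paper's: the same choice of parameters with $F=0$, the same characterization of minimality as $(c_{22}^{1},c_{22}^{2})=-\tfrac{G}{E}(c_{11}^{1},c_{11}^{2})$, and the same translation of umbilicity into $\Delta_{2}=0$ together with $E\Delta_{3}=G\Delta_{1}$. If anything, your converse is slightly more careful than the paper's, which tacitly assumes $\rho\neq 0$ and $(c_{11}^{1},c_{11}^{2})\neq(0,0)$, whereas you explicitly rule out these degenerate branches using the no-flat-points hypothesis.
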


\vskip 2mm \noindent \emph{Proof:}
Without loss of generality we assume that $F = 0$ and take $x, y$ to be the unit vector fields defined
by $x=\ds{\frac{z_u}{\sqrt{E}}}$, $y=\ds{\frac{z_v}{\sqrt{-G}}}$.
It follows from formulas \eqref{E:Eq2-b} that $\sigma(x,x) - \sigma(y,y) = 0$ if and only if
$$\left(\frac{c_{11}^{1}}{E} + \frac{c_{22}^{1}}{G} \right) n_1 - \left(\frac{c_{11}^{2}}{E} +
\frac{c_{22}^{1}}{G} \right) n_2 = 0,$$
or equivalently,
\begin{equation}\label{E:Eq3-a}
\begin{array}{l}
\vspace{2mm}
c_{22}^1 = \ds{-\frac{G}{E} \,c_{11}^1};\\
\vspace{2mm}
c_{22}^2 = \ds{-\frac{G}{E} \,c_{11}^2}.
\end{array}
\end{equation}

Now, if $H = 0$ then \eqref{E:Eq3-a} implies
$$\Delta_2 = 0;\quad \quad \frac{\Delta_3}{G}=\frac{\Delta_1}{E}.$$
Therefore
$$L = \rho\, E; \quad M = \rho \,F; \quad N = \rho \,G,$$
where $\rho$ is a function on $M^2_1$. Hence, all points of  $M^2_1$ are umbilical.

Conversely, if $L = \rho\, E; \; M = \rho \,F; \; N = \rho \,G, \; \rho \neq 0$, then
the condition $F=0$ implies that $M=0$. Hence,
$\displaystyle{\left|%
\begin{array}{cc}
\vspace{2mm}
  c_{11}^1 & c_{22}^1 \\
  c_{11}^2 & c_{22}^2 \\
\end{array}%
\right|=0}$, and so there exists a function $\widetilde{\rho}$ such that
$c_{22}^1=\widetilde{\rho} c_{11}^1$,
$c_{22}^2=\widetilde{\rho} c_{11}^2$. Further, the equality
$\displaystyle{\frac{L}{E}=\frac{N}{G}}$ implies that
$\displaystyle{\widetilde{\rho}=-\frac{G}{E}}$. Hence, equalities \eqref{E:Eq3-a} hold.
Consequently, $\tr \, \sigma = 0$, i.e. $H=0$.

\qed

\vskip 3mm

The meaning of the above proposition is that in $\E^4_2$  the Lorentz surfaces consisting of
umbilical points  are exactly the minimal surfaces. The complete classification of minimal surfaces in
an arbitrary indefinite pseudo-Eucliden space $\E^m_s$  is given by B.-Y. Chen in the next theorem.

\vskip 2mm
\begin{thm}\label{T:minimal} \cite{Chen3}
A Lorentz surface in a pseudo-Euclidean m-space $\E^m_s$ is minimal if and only if,
locally the surface is parametrized by
$$z(u,v) = \alpha(u) + \beta(v),$$
where $\alpha$ and $\beta$ are null curves satisfying $\left\langle \alpha'(u), \beta'(v) \right\rangle \neq 0$.
\end{thm}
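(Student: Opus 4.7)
The plan is to exploit a null (isotropic) parametrization of the Lorentz surface, which is the Lorentz analogue of isothermal coordinates. Concretely, I would first show that on any Lorentz surface in $\E^m_s$ one can locally introduce parameters $(u,v)$ along the two null directions of the induced metric, so that $E = \langle z_u, z_u \rangle = 0$ and $G = \langle z_v, z_v \rangle = 0$, while $F = \langle z_u, z_v \rangle \neq 0$. This reduces the entire problem to an analysis in these special coordinates.

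For the ``only if'' direction, my plan is to compute the Christoffel symbols in null coordinates. A short calculation using $g_{11} = g_{22} = 0$ and $g^{12} = 1/F$ gives $\Gamma_{12}^1 = \Gamma_{12}^2 = 0$, so the Gauss formula reduces to $z_{uv} = \sigma(z_u, z_v)$; in other words, $z_{uv}$ is purely normal. The mean curvature vector then takes the very simple form
\[
H \;=\; \tfrac{1}{2}\,g^{ij}\sigma(z_i, z_j) \;=\; g^{12}\,\sigma(z_u, z_v) \;=\; \tfrac{1}{F}\,z_{uv}.
\]
Thus the minimality condition $H = 0$ is equivalent to $z_{uv} = 0$, which integrates to $z(u,v) = \alpha(u) + \beta(v)$. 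The remaining conditions fall out automatically: $E = 0$ becomes $\langle \alpha'(u), \alpha'(u) \rangle = 0$, $G = 0$ becomes $\langle \beta'(v), \beta'(v) \rangle = 0$, so $\alpha$ and $\beta$ are null curves, and $F \neq 0$ translates into $\langle \alpha'(u), \beta'(v) \rangle \neq 0$.

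For the ``if'' direction, I would simply verify that a surface parametrized by $z(u,v) = \alpha(u) + \beta(v)$ with $\alpha, \beta$ null and $\langle \alpha'(u), \beta'(v) \rangle \neq 0$ satisfies all the preceding conditions. The first fundamental form is automatically $E = G = 0$ and $F = \langle \alpha', \beta' \rangle \neq 0$, so $(u,v)$ are null coordinates and the induced metric is Lorentzian since $EG - F^2 = -F^2 < 0$. Since $z_{uv} = 0$ identically, the formula $H = z_{uv}/F$ gives $H = 0$, i.e.\ the surface is minimal.

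The main obstacle is the very first step: establishing the local existence of a null parametrization on $M^2_1$. This requires producing two transversal families of null curves filling a neighborhood of each point, the Lorentz counterpart of isothermal coordinates. The cleanest argument is to integrate the two one-dimensional null line fields of the induced Lorentzian metric, which are smooth and transversal because $g$ has signature $(1,1)$; Frobenius' theorem is automatic for one-dimensional distributions, so each null field integrates to a foliation, and the leaves of the two foliations furnish the desired coordinate net with $E = G = 0$ and $F \neq 0$.
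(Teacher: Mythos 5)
Your argument is correct, and it is essentially the standard proof of this classical result; note that the paper itself offers no proof here --- the theorem is simply quoted from Chen's classification paper \cite{Chen3}, where the argument runs along the same lines as yours (null coordinates, vanishing of $\Gamma^1_{12}$ and $\Gamma^2_{12}$, hence $H = z_{uv}/F$, hence $H=0 \iff z_{uv}=0$). The only step needing care is the local existence of the null coordinate net, and your treatment of it is adequate: the two null directions of a signature $(1,1)$ metric are distinct and depend smoothly on the point, each line field integrates to a foliation, and local first integrals of the two foliations give coordinates with $E=G=0$ and, by nondegeneracy ($EG-F^2=-F^2\neq 0$), $F\neq 0$.
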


\vskip 3mm
Further, we consider surfaces free of umbilical (minimal) points.
Note that if $p \in M^2_1$ is a non-umbilical point and $\varkappa^2 - k > 0$, then there exist exactly
two principal tangents passing through $p$.
We assume that
 $M^2_1$  is parameterized by principal lines, i.e. $F = 0$ and $M=0$, and suppose that the principal tangents
 $z_u$ and $z_v$  are spacelike and
 timelike, respectively.
 Consider the unit tangent vector fields $x$ and $y$  defined by $x=\ds{\frac{z_u}{\sqrt{E}}}$,
 $y=\ds{\frac{z_v}{\sqrt{-G}}}$, i.e. $x$ and $y$ are collinear with the principal directions.
The condition $M=0$ implies that $\sigma(x, x)$ and $\sigma(y, y)$ are collinear. So, there exists
a geometrically determined normal vector field $n$ such that
$$\begin{array}{l}
\vspace{2mm} \sigma(x, x)=\nu_1 n\\
\vspace{2mm} \sigma(y, y)=\nu_2 n,
\end{array}$$
where $\nu_1$ and $\nu_2$ are functions on $M^2_1$. Hence,  the mean curvature vector field $H$ is expressed as follows:
$$H=\frac{\nu_1-\nu_2}{2}\,n.$$

We have the following possibilities for the mean
curvature vector field:
\begin{itemize}
\item
$H$ is \textit{spacelike}, i.e. $\langle H,H \rangle >0$;

\item
$H$ is \textit{timelike}, i.e. $\langle H,H \rangle <0$;

\item
$H$ is \textit{lightlike}, i.e. $\langle H,H \rangle =0$.
\end{itemize}

If the mean curvature vector is lightlike at each point, i.e. $H \neq 0$ and $\langle H, H \rangle =0$,
the surface $M^2_1$ is \textit{quasi-minimal}.
In what follows we study surfaces free of minimal points and such that the mean curvature vector $H$ is
either spacelike or timelike at each point. We call such surfaces \textit{Lorentz surfaces of general type}.

\subsection{Lorentz surfaces whose mean curvature vector at each point is a
non-zero spacelike vector}\label{S:spacelikeH}

In this subsection we consider the case  $\left\langle  H, H  \right\rangle > 0$.
In order to obtain a geometrically determined frame field at each point $p$ of $M^2_1$, we denote by
$b$ the unit normal vector field defined by
$b=\ds{\frac{H}{{\sqrt{\langle H, H \rangle}}}}$. Note that $\langle b, b\rangle=1$ and $b$
is collinear with $\sigma(x, x)$ and
$\sigma(y, y)$. Further we consider the unit normal vector field $l$ such that $\{x, y, b, l\}$ is a
positively oriented orthonormal frame field in $\E_2^4$. So,   $\langle l, l\rangle=-1$. In such
a way we obtain a geometrically determined moving frame field at each point of the surface.
With respect to this frame field the following formulas are true:
$$\begin{array}{l}
\vspace{2mm} \sigma(x, x)=\nu_1 \,b;\\
\vspace{2mm} \sigma(x, y)=\lambda\, b-\mu\, l;\\
\vspace{2mm} \sigma(y, y)=\nu_2\, b,
\end{array}$$
where $\nu_1, \nu_2, \lambda$, $\mu$ are functions on $M^2_1$ determined by the geometric frame field
as follows: $\nu_1=\langle \sigma(x, x), b\rangle$, $\nu_2=\langle \sigma(y, y), b\rangle$,
$\lambda=\langle \sigma(x, y), b\rangle$, $\mu=\langle \sigma(x, y), l\rangle$.

Now with respect to the geometric frame field $\{x, y, b, l\}$ we have the following Frenet-type
derivative formulas of $M^2_1$:
\begin{equation}\label{E:eq3}
\begin{array}{ll}
\vspace{2mm}\nabla'_{x}x=\quad\quad\;-\gamma_{1}\,y+\;\nu_{1}\,b, & \qquad\quad\nabla'_{x}b=
-\nu_{1}\,x+\lambda\,y\quad\,\quad-\beta_{1}\,l,\\
\vspace{2mm}\nabla'_{x}y=-\gamma_{1}\,x\quad\quad\;+\;\lambda\,b-\mu\;l, & \qquad\quad\nabla'_{y}b=
-\lambda\,x+\nu_{2}\,y\quad\quad\,-\beta_{2}\,l,\\
\vspace{2mm}\nabla'_{y}x=\quad\quad\;-\gamma_{2}\,y+\;\lambda\,b-\mu\;l, & \qquad\quad\nabla'_{x}l=
\quad\quad\quad\ \mu\;y-\beta_{1}\,b,\\
\vspace{2mm}\nabla'_{y}y=\;-\gamma_{2}\,x\quad\quad+\nu_{2}\,b, & \qquad\quad\nabla'_{y}l=
-\mu\;x\quad\quad\;-\beta_{2}\,b,
\end{array}
\end{equation}
where $\gamma_1=-y(\ln\sqrt{E}) =\langle\nabla'_xx, y\rangle $, $\gamma_2=-x(\ln\sqrt{-G}) =
\langle\nabla'_yx, y\rangle $, $\beta_1=\langle\nabla'_{x}b, l\rangle$ and $\beta_2=\langle\nabla'_{y}b, l\rangle.$

The mean curvature vector field is expressed as
$$H=\frac{\nu_1-\nu_2}{2}\, b.$$

\subsection{Lorentz surfaces whose mean curvature vector at each point is a
timelike vector}\label{S:timelikeH}

In this subsection we consider the case  $\left\langle  H, H  \right\rangle < 0$.
Now we set the unit normal vector field $b$ to be $b = \ds{-\frac{H}{{\sqrt{-\langle H, H \rangle}}}}$.
In this case we have $\langle b, b\rangle=-1$. Taking the normal unit vector field $l$ in such a way
that $\{x, y, l, b\}$ be a positively oriented orthonormal frame field in
$\E_2^4$, we get a geometrically determined orthonormal frame field at each point of the surface.
Note that $\langle l, l \rangle = 1$.
The  Frenet-type derivative formulas  of the surface look as follows:
\begin{equation}\label{E:eq4}
\begin{array}{ll}
\vspace{2mm}\nabla'_{x}x=\quad\quad\;-\gamma_{1}\,y\quad\quad\ -\nu_{1}\,b, & \qquad\quad\nabla'_{x}l=
\quad\quad\quad\quad\mu\;y\quad\quad+\beta_{1}\,b,\\
\vspace{2mm}\nabla'_{x}y=-\gamma_{1}\,x\quad\quad\;+\;\mu\,l-\lambda\;b, & \qquad\quad\nabla'_{y}l=
-\mu\;x\quad\quad\;\quad\quad\quad+\beta_{2}\,b,\\
\vspace{2mm}\nabla'_{y}x=\quad\quad\;-\gamma_{2}\,y+\;\mu\,l-\lambda\;b, & \qquad\quad\nabla'_{x}b=
-\nu_{1}\,x+\lambda\,y+\beta_{1}\,l,\\
\vspace{2mm}\nabla'_{y}y=\;-\gamma_{2}\,x\quad\quad\quad\quad\ -\nu_{2}\,b, & \qquad\quad\nabla'_{y}b=
-\lambda\,x+\nu_{2}\,y+\beta_{2}\,l.\\
\end{array}
\end{equation}
where $\nu_1$, $\nu_2$, $\lambda$, $\mu$, $\gamma_1$, $\gamma_2$, $\beta_1$, $\beta_2$ are determined
by the geometric moving frame field in the same way as in the previous case. In this case
the mean curvature vector field $H$ is expressed by the formula
$$H=-\frac{\nu_1-\nu_2}{2} \,b.$$

\subsection{Fundamental theorem for Lorentz surfaces of general type}

The general fundamental existence and uniqueness theorems for
submanifolds of pseudo-Riemannian manifolds are formulated in
terms of tensor fields and connections on vector bundles (e.g.
\cite{Chen2}, Theorem 2.4 and Theorem 2.5). In \cite{GM3} we
formulated and proved a fundamental theorem in terms of
the geometric functions of a spacelike surface  in $\R^4_1$ whose mean
curvature vector at any point is a non-zero spacelike vector or
timelike vector.  In \cite{GM4} we proved the fundamental existence and
uniqueness theorem for marginally trapped surfaces in terms of
their geometric functions.
Here we shall formulate and prove the fundamental existence and
uniqueness theorem for Lorentz surfaces in $\E^4_2$ of the general class.
This theorem is a special case of the
general fundamental theorem but in the present form it is  more
appropriate and easier to apply.

The Frenet-type formulas given in \eqref{E:eq3} and \eqref{E:eq4} for the case of spacelike and timelike
mean curvature vector field, respectively,
can be unified as follows:
\begin{equation}\label{E:eq5}
\begin{array}{ll}
\vspace{2mm}\nabla'_{x}x=\quad\quad\;-\gamma_{1}\,y+\varepsilon\:\nu_{1}\,b, & \qquad\quad\nabla'_{x}b=
-\nu_{1}\,x+\lambda\,y\quad\,\quad\quad-\varepsilon\:\beta_{1}\,l,\\
\vspace{2mm}\nabla'_{x}y=-\gamma_{1}\,x\quad\quad\;+\varepsilon\:\lambda\,b-\varepsilon\:\mu\;l, & \qquad\quad\nabla'_{y}b=
-\lambda\,x+\nu_{2}\,y\quad\quad\quad\,-\varepsilon\:\beta_{2}\,l,\\
\vspace{2mm}\nabla'_{y}x=\quad\quad\;-\gamma_{2}\,y+\varepsilon\:\lambda\,b-\varepsilon\:\mu\;l & \qquad\quad\nabla'_{x}l=
\quad\quad\quad\ \mu\;y-\varepsilon\:\beta_{1}\,b,\\
\vspace{2mm}\nabla'_{y}y=\;-\gamma_{2}\,x\quad\quad+\varepsilon\:\nu_{1}\,b, & \qquad\quad\nabla'_{y}l=
-\mu\;x\quad\quad\;-\varepsilon\:\beta_{2}\,b,
\end{array}
\end{equation}
where $\varepsilon=1$ in the case $\langle H, H \rangle >0$ and $\varepsilon=-1$ in the case $\langle H, H \rangle <0$.
In both cases
$\ds{b = \frac{\varepsilon H}{\sqrt{\varepsilon \langle H, H \rangle}}}$; $\langle b, b \rangle = \varepsilon$;
$\langle l, l \rangle = -\varepsilon$; $\langle b, l \rangle = 0$.

\vskip 2mm
Taking into account that $R'(x, y, x)=0$, $R'(x, y, y)=0$, $R'(x, y, b)=0$, and using derivative formulas
\eqref{E:eq5} we get the following integrability conditions:

\begin{equation} \label{E:eq5-i}
\begin{array}{l}
\vspace{2mm}
2\mu\,\gamma_{2}-\varepsilon\,\lambda\,\beta_{1}+\varepsilon\,\nu_{1}\,\beta_{2}=x(\mu);\\
\vspace{2mm}
2\mu\,\gamma_{1}+\varepsilon\,\nu_{2}\,\beta_{1}-\varepsilon\,\lambda\,\beta_{2}=y(\mu);\\
\vspace{2mm}
2\lambda\,\gamma_{2}-\varepsilon\,\mu\,\beta_{1}-(\nu_{1}+\nu_{2})\,\gamma_{1}=x(\lambda)-y(\nu_{1});\\
\vspace{2mm}
2\lambda\,\gamma_{1}-\varepsilon\,\mu\,\beta_{2}-(\nu_{1}+\nu_{2})\,\gamma_{2}=-x(\nu_{2})+y(\lambda);\\
\vspace{2mm}
\gamma_{1}\,\beta_{1}-\gamma_{2}\,\beta_{2}+(\nu_{1}+\nu_{2})\,\mu=-x(\beta_{2})+y(\beta_{1});\\
\vspace{2mm}
\varepsilon(\lambda^{2}-\mu^{2}-\nu_{1}\,\nu_{2})=x(\gamma_{2})-y(\gamma_{1})+(\gamma_{1})^{2}-(\gamma_{2})^{2}.
\end{array}
\end{equation}

Having in mind that $x={\ds{\frac{z_{u}}{\sqrt{E}},\,y=\frac{z_{v}}{\sqrt{-G}}}}$, we can write
the integrability conditions in the following way:
$$\begin{array}{l}
\vspace{2mm}
2\mu\,\gamma_{2}-\varepsilon\,\lambda\,\beta_{1}+\varepsilon\,\nu_{1}\,\beta_{2}=
{\displaystyle {\frac{1}{\sqrt{E}}\,\mu_{u}};}\\
\vspace{2mm}
2\mu\,\gamma_{1}+\varepsilon\,\nu_{2}\,\beta_{1}-\varepsilon\,\lambda\,\beta_{2}=
{\displaystyle {\frac{1}{\sqrt{-G}}\,\mu_{v}};}\\
\vspace{2mm}
2\lambda\,\gamma_{2}-\varepsilon\,\mu\,\beta_{1}-(\nu_{1}+\nu_{2})\,\gamma_{1}=
{\displaystyle {\frac{1}{\sqrt{E}}\,\lambda_{u}-\frac{1}{\sqrt{-G}}\,(\nu_{1})_{v}};}\\
\vspace{2mm}
2\lambda\,\gamma_{1}-\varepsilon\,\mu\,\beta_{2}-(\nu_{1}+\nu_{2})\,\gamma_{2}=
{\displaystyle {-\frac{1}{\sqrt{E}}\,(\nu_{2})_{u}+\frac{1}{\sqrt{-G}}\,\lambda_{v}};}\\
\vspace{2mm}
\gamma_{1}\,\beta_{1}-\gamma_{2}\,\beta_{2}+(\nu_{1}+\nu_{2})\,\mu=
{\displaystyle {-\frac{1}{\sqrt{E}}\,(\beta_{2})_{u}+\frac{1}{\sqrt{-G}}\,(\beta_{1})_{v}};}\\
\vspace{2mm}
\varepsilon(\lambda^{2}-\mu^{2}-\nu_{1}\,\nu_{2})=
{\ds{\frac{1}{\sqrt{E}}\,(\gamma_{2})_{u}-\frac{1}{\sqrt{-G}}\,(\gamma_{1})_{v}+(\gamma_{1})^{2}-(\gamma_{2})^{2}}.}\\
\end{array}$$

It is clear that if $\mu_u\,\mu_v \neq 0$, then we can express the functions $\sqrt{E}$ and $\sqrt{-G}$
in the following way:
$$\begin{array}{l}
\vspace{2mm} \sqrt{E}={\ds\frac{\mu_u}{2\mu\,\gamma_{2}-\varepsilon\,\lambda\,\beta_{1}+
\varepsilon\,\nu_{1}\,\beta_{2}}};\\
\vspace{2mm} \sqrt{-G}={\ds\frac{\mu_v}{2\mu\,\gamma_{1}+\varepsilon\,\nu_{2}\,\beta_{1}-
\varepsilon\,\lambda\,\beta_{2}}}.
\end{array}$$
The condition $\mu_u \,\mu_v \neq 0$ is equivalent to
$(2\mu\,\gamma_{2}-\varepsilon\,\lambda\,\beta_{1}+\varepsilon\,\nu_{1}\,\beta_{2}) (2\mu\,\gamma_{1}+
\varepsilon\,\nu_{2}\,\beta_{1}-\varepsilon\,\lambda\,\beta_{2}) \neq 0$.

\vskip 2mm

Now we shall prove the following  Bonnet-type theorem for
Lorentz surfaces in $\E^4_2$.

\begin{thm}\label{T:Fund-Theorem}
Let $\gamma_{1},\,\gamma_{2},\,\nu_{1},\,\nu_{2},\,\lambda,\,\mu,\,\beta_{1},\beta_{2}$
be smooth functions, defined in a domain $\mathcal{D},\,\,\mathcal{D}\subset{\R}^{2}$,
and satisfying the conditions
\begin{equation}\label{E:eq6}
\begin{array}{l}
\vspace{2mm}
{\displaystyle {\frac{\mu_{u}}{2\mu\,\gamma_{2}-\varepsilon\,\lambda\,\beta_{1}+\varepsilon\,\nu_{1}\,\beta_{2}}}>0;}\\
\vspace{2mm}
{\displaystyle {\frac{\mu_{v}}{2\mu\,\gamma_{1}+\varepsilon\,\nu_{2}\,\beta_{1}-\varepsilon\,\lambda\,\beta_{2}}}>0;}\\
\vspace{2mm}
-\gamma_{1}\sqrt{E}\sqrt{-G}=(\sqrt{E})_{v};\\
\vspace{2mm}
-\gamma_{2}\sqrt{E}\sqrt{-G}=(\sqrt{-G})_{u};\\
\vspace{2mm}
2\lambda\,\gamma_{2}-\varepsilon\,\mu\,\beta_{1}-(\nu_{1}+\nu_{2})\,\gamma_{1}=
{\displaystyle {\frac{1}{\sqrt{E}}\,\lambda_{u}-\frac{1}{\sqrt{-G}}\,(\nu_{1})_{v}}};\\
\vspace{2mm}
2\lambda\,\gamma_{1}-\varepsilon\,\mu\,\beta_{2}-(\nu_{1}+\nu_{2})\,\gamma_{2}=
{\displaystyle {-\frac{1}{\sqrt{E}}\,(\nu_{2})_{u}+\frac{1}{\sqrt{-G}}\,\lambda_{v}};}\\
\vspace{2mm}
\gamma_{1}\,\beta_{1}-\gamma_{2}\,\beta_{2}+(\nu_{1}+\nu_{2})\,\mu=
{\displaystyle {-\frac{1}{\sqrt{E}}\,(\beta_{2})_{u}+\frac{1}{\sqrt{-G}}\,(\beta_{1})_{v}}};\\
\varepsilon(\lambda^{2}-\mu^{2}-\nu_{1}\,\nu_{2})={\ds{\frac{1}{\sqrt{E}}\,(\gamma_{2})_{u}-\frac{1}
{\sqrt{-G}}\,(\gamma_{1})_{v}+(\gamma_{1})^{2}-(\gamma_{2})^{2}}}.\\
\end{array}
\end{equation}
where $\vspace{2mm} \sqrt{E}={\ds\frac{\mu_u}{2\mu\,\gamma_{2}-\varepsilon\,\lambda\,\beta_{1}+
\varepsilon\,\nu_{1}\,\beta_{2}}},
\sqrt{-G}={\ds\frac{\mu_v}{2\mu\,\gamma_{1}+\varepsilon\,\nu_{2}\,\beta_{1}-\varepsilon\,\lambda\,\beta_{2}}}$.
Let $\{x_{0},\,y_{0},\,b_{0},\,l_{0}\}$ be an orthonormal frame at
a point $p_{0}\in\E^4_2$ (with $\langle b_0, b_0 \rangle = \varepsilon$;
$\langle l_0, l_0 \rangle = -\varepsilon$; $\langle b_0, l_0 \rangle = 0$). Then there exist a subdomain
${\mathcal{D}}_{0}\subset\mathcal{D}$
and a unique Lorentz surface $M^2_1:z=z(u,v),\,\,(u,v)\in{\mathcal{D}}_{0}$,
whose mean curvature vector at any point is a non-zero spacelike vector if $\varepsilon = 1$, or timelike
vector if $\varepsilon = -1$.
Moreover, $M^2_1$ passes through $p_{0}$, the functions
$\gamma_{1},\,\gamma_{2},\,\nu_{1},\,\nu_{2},\,\lambda,\,\mu,\,\beta_{1},\beta_{2}$ are
the geometric functions of $M^2_1$ and $\{x_{0},\,y_{0},\,b_{0},\,l_{0}\}$
is the geometric frame of $M^2_1$ at  point $p_{0}$.
\end{thm}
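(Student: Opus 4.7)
The plan is to convert the Frenet-type formulas \eqref{E:eq5}, together with $x = z_u/\sqrt{E}$ and $y = z_v/\sqrt{-G}$, into an overdetermined linear first-order PDE system for the moving frame $\{x, y, b, l\}$ regarded as four $\E^4_2$-valued functions of $(u,v)$, and then to recover $z$ itself by a line integral. The existence and uniqueness will follow from the standard Maurer--Cartan/Frobenius theorem once every compatibility condition is matched against \eqref{E:eq6}.

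First I would define $\sqrt{E}$ and $\sqrt{-G}$ by the two explicit formulas in the statement, which by hypothesis are positive. Using $\nabla'_{z_u} = \sqrt{E}\,\nabla'_x$ and $\nabla'_{z_v} = \sqrt{-G}\,\nabla'_y$, the eight equations \eqref{E:eq5} yield a system
\begin{equation*}
F_u = A(u,v)\,F,\qquad F_v = B(u,v)\,F,
\end{equation*}
where $F = (x, y, b, l)^{\top}$ and $A, B$ are $4\times 4$ matrices of smooth functions built from the prescribed data and from $\sqrt{E}, \sqrt{-G}$. With the given orthonormal quadruple $\{x_0, y_0, b_0, l_0\}$ as initial value at $(u_0, v_0)$, the Frobenius theorem produces a unique smooth $F$ on a simply-connected subdomain $\mathcal{D}_0 \ni (u_0,v_0)$, provided the Maurer--Cartan condition $A_v - B_u + [A, B] = 0$ holds.

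I would then show that $F$ remains pseudo-orthonormal: since the coefficient matrix $A\,du + B\,dv$ is skew-symmetric with respect to the diagonal inner product $\mathrm{diag}(1, -1, \varepsilon, -\varepsilon)$---read off directly from \eqref{E:eq5}---the Gram matrix of $F$ satisfies a homogeneous linear PDE and hence coincides with its initial value throughout $\mathcal{D}_0$. With $F$ in hand, $z$ is defined by $z_u = \sqrt{E}\,x$, $z_v = \sqrt{-G}\,y$, $z(u_0, v_0) = p_0$. The compatibility $z_{uv} = z_{vu}$ expands, after using the derivative formulas for $x_v$ and $y_u$ and cancelling the common $b, l$ components, to
\begin{equation*}
[(\sqrt{E})_v + \gamma_1 \sqrt{E}\sqrt{-G}]\,x \;=\; [(\sqrt{-G})_u + \gamma_2 \sqrt{E}\sqrt{-G}]\,y,
\end{equation*}
and by linear independence of $x, y$ this is precisely the third and fourth relations in \eqref{E:eq6}. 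Since $\langle z_u, z_u\rangle = E > 0$ and $\langle z_v, z_v\rangle = G < 0$, the map $z$ parametrizes a Lorentz surface whose first fundamental form and geometric frame are the prescribed ones, and the mean curvature vector reduces to $\frac{\varepsilon(\nu_1 - \nu_2)}{2}\,b$, whose causal type is fixed by $\varepsilon$.

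The main technical obstacle is to verify that the Maurer--Cartan identity $A_v - B_u + [A, B] = 0$ for the full $4\times 4$ frame system is equivalent to the remaining six relations in \eqref{E:eq6}. Fortunately, the derivation of \eqref{E:eq5-i} from \eqref{E:eq5} earlier in the paper is exactly the computation of the components of $A_v - B_u + [A, B]$ along $x, y, b, l$, carried out under the assumption $R' = 0$; running that derivation in reverse on our abstractly given functions shows that \eqref{E:eq5-i}---and hence, after substitution of the defining expressions for $\sqrt{E}$ and $\sqrt{-G}$, the system \eqref{E:eq6}---is both necessary and sufficient for integrability. Uniqueness is then automatic: any other Lorentz surface with the same geometric functions and frame at $p_0$ has a frame field solving the same PDE system with the same initial data, so it must coincide with $F$, and the corresponding $\tilde z$ must coincide with $z$ after integration from $p_0$.
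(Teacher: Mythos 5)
Your proposal is correct and follows essentially the same route as the paper: set up the linear frame system $Z_u=AZ$, $Z_v=BZ$ from \eqref{E:eq5}, verify its compatibility conditions against \eqref{E:eq6}, solve by Frobenius, and then integrate $z_u=\sqrt{E}\,x$, $z_v=\sqrt{-G}\,y$. Your observation that $A$ and $B$ are skew-symmetric with respect to $\mathrm{diag}(1,-1,\varepsilon,-\varepsilon)$ is just a cleaner packaging of the paper's argument that the ten functions $\varphi_i$ satisfy a homogeneous linear system with zero initial data.
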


\vskip 2mm
\begin{proof}
We consider the following system of partial
differential equations for the unknown vector functions $x=x(u,v),\,y=y(u,v),\,b=b(u,v),\,l=l(u,v)$
in $\E_{2}^{4}$:
\begin {equation}\label{E:eq7}
\begin{array}{ll}
\vspace{2mm}x_{u}=(-\gamma_{1}\,y+\varepsilon\,\nu_{1}\,b)\sqrt{E} & \qquad x_{v}=
(-\gamma_{2}\,y+\varepsilon\,\lambda\,b-\varepsilon\,\mu\,l)\sqrt{-G}\\
\vspace{2mm}y_{u}=(-\gamma_{1}\,x+\varepsilon\,\lambda\,b-\varepsilon\,\mu\,l)\sqrt{E} & \qquad y_{v}=
(-\gamma_{2}\,x+\varepsilon\,\nu_{2}\,b)\sqrt{-G}\\
\vspace{2mm}b_{u}=(-\nu_{1}\,x+\lambda\,y-\varepsilon\,\beta_{1}\,l)\sqrt{E} & \qquad b_{v}=(-\lambda\,x+
\nu_{2}\,y-\varepsilon\,\beta_{2}\,l)\sqrt{-G}\\
\vspace{2mm}l_{u}=(\mu\,y-\varepsilon\,\beta_{1}\,b)\sqrt{E} & \qquad l_{v}=
(-\mu\,x-\varepsilon\,\beta_{2}\,b)\sqrt{-G}
\end{array}
\end{equation}
We denote
$$Z=\left(\begin{array}{c}
x\\
y\\
b\\
l
\end{array}\right);\quad
A=\sqrt{E}\left(\begin{array}{cccc}
0 & -\gamma_{1} & \varepsilon\,\nu_{1} & 0\\
-\gamma_{1} & 0 & \varepsilon\,\lambda & -\varepsilon\,\mu\\
-\nu_{1} & \lambda & 0 & -\varepsilon\,\beta_{1}\\
0 & \mu & -\varepsilon\,\beta_{1} & 0
\end{array}\right);$$

$$B=\sqrt{-G}\left(\begin{array}{cccc}
0 & -\gamma_{2} & \varepsilon\,\lambda & -\varepsilon\,\mu\\
-\gamma_{2} & 0 & \varepsilon\,\nu_{2} & 0\\
-\lambda & \nu_{2} & 0 & -\varepsilon\,\beta_{2}\\
-\mu & 0 & -\varepsilon\,\beta_{2} & 0
\end{array}\right).$$
Using  matrices $A$ and $B$ we can rewrite  system \eqref{E:eq7} in the form:
\begin {equation}\label{E:eq8}
\begin{array}{l}
\vspace{2mm}Z_{u}=A\,Z;\\
\vspace{2mm}Z_{v}=B\,Z.
\end{array}
\end{equation}

The integrability conditions of system \eqref{E:eq8} are
$$Z_{uv}=Z_{vu},$$
i.e.
\begin {equation}\label{E:eq9}
{\displaystyle {\frac{\partial a_{i}^{k}}{\partial v}-\frac{\partial b_{i}^{k}}
{\partial u}+\sum_{j=1}^{4}(a_{i}^{j}\,b_{j}^{k}-b_{i}^{j}\,a_{j}^{k})=0,\quad i,k=1,\dots,4,}}
\end {equation}
where $a_{i}^{j}$ and $b_{i}^{j}$ are the elements of the matrices
$A$ and $B$. Using \eqref{E:eq6} we obtain that equalities \eqref{E:eq9} are fulfilled.
Hence, there exist a subset $\mathcal{D}_{1}\subset\mathcal{D}$
and unique vector functions $x=x(u,v),\,y=y(u,v),\,b=b(u,v),\,l=l(u,v),\,\,(u,v)\in\mathcal{D}_{1}$,
which satisfy system \eqref{E:eq7} and the initial conditions
$$x(u_{0},v_{0})=x_{0},\quad y(u_{0},v_{0})=y_{0},\quad b(u_{0},v_{0})=b_{0},\quad l(u_{0},v_{0})=l_{0}.$$

We shall prove that the vectors $x(u,v),\,y(u,v),\,b(u,v),\,l(u,v)$
form an orthonormal frame in $\E_{2}^{4}$ for each $(u,v)\in\mathcal{D}_{1}$.
Let us consider the following functions:
$$\begin{array}{lll}
\vspace{2mm}\varphi_{1}=\langle x,x\rangle-1; & \qquad\varphi_{5}=\langle x,y\rangle; & \qquad\varphi_{8}=
\langle y,b\rangle;\\
\vspace{2mm}\varphi_{2}=\langle y,y\rangle+1; & \qquad\varphi_{6}=\langle x,b\rangle; & \qquad\varphi_{9}=
\langle y,l\rangle;\\
\vspace{2mm}\varphi_{3}=\langle b,b\rangle-\varepsilon; & \qquad\varphi_{7}=\langle x,l\rangle; & \qquad\varphi_{10}=
\langle b,l\rangle;\\
\vspace{2mm}\varphi_{4}=\langle l,l\rangle+\varepsilon;
\end{array}$$
defined for  $(u,v)\in\mathcal{D}_{1}$.
Since $x(u,v),\,y(u,v),\,b(u,v),\,l(u,v)$
satisfy \eqref{E:eq7}, for the derivatives of $\varphi_{i}$ we obtain the following system:
$$\begin{array}{lll}
\vspace{2mm}{\displaystyle {\frac{\partial\varphi_{i}}{\partial u}=\alpha_{i}^{j}\,\varphi_{j}},}\\
\vspace{2mm}{\displaystyle {\frac{\partial\varphi_{i}}{\partial v}=\beta_{i}^{j}\,\varphi_{j}};}
\end{array}\qquad i=1,\dots,10,$$
where $\alpha_{i}^{j},\beta_{i}^{j},\,\,i,j=1,\dots,10$ are functions of $(u,v)\in\mathcal{D}_{1}$.
This is a linear system of partial differential equations
for the functions
$\varphi_i(u,v), \,\,i = 1, \dots, 10, \,\,(u,v) \in
\mathcal{D}_1$, satisfying the initial conditions $\varphi_i(u_0,v_0) = 0, \,\,i = 1,
\dots, 10$.
 Hence, $\varphi_{i}(u,v)=0,\,\,i=1,\dots,10$ for each $(u,v)\in\mathcal{D}_{1}$.
Consequently, the vector functions $x(u,v),\,y(u,v),\,b(u,v),\,l(u,v)$ form an orthonormal frame in
$\E_{2}^{4}$ for each $(u,v)\in\mathcal{D}_{1}$
(with  $\langle b, b \rangle = \varepsilon$;
$\langle l, l \rangle = -\varepsilon$; $\langle b, l \rangle = 0$).

Now we consider the following system of partial differential equations for the vector function $z=z(u, v)$:
\begin{equation}\label{E:eq10}
\begin{array}{l}
\vspace{2mm}z_{u}=\sqrt{E}\,x\\
\vspace{2mm}z_{v}=\sqrt{-G}\,y
\end{array}
\end{equation}
By the use of \eqref{E:eq6} and  \eqref{E:eq7} we get that the integrability conditions $z_{uv}=z_{vu}$
of system \eqref{E:eq10} are fulfilled.
Hence, there exist a subset $\mathcal{D}_{0}\subset\mathcal{D}_{1}$
and a unique vector function $z=z(u,v)$, defined for $(u,v)\in\mathcal{D}_{0}$
and satisfying $z(u_{0},v_{0})=p_{0}$.

Consequently, the surface $M^2_1:z=z(u,v),\,\,(u,v)\in\mathcal{D}_{0}$
satisfies the assertion of the theorem.

\end{proof}

\section{Basic classes of Lorentz surfaces characterized in terms of their geometric functions}

Further we shall give expressions for the Gauss curvature $K$ and the invariant functions $k$ and $\varkappa$.

We use that for pseudo-Riemannian submanifolds the Gauss curvature is given by the following formula:
$$K=\ds{\frac{\langle \sigma(x, x),\sigma(y, y)\rangle-\langle \sigma(x, y),\sigma(x, y)\rangle}
{\langle x, x\rangle\langle y, y\rangle-\langle x, y\rangle^2}}.$$
Since $\langle x, x \rangle = 1$, $\langle y, y \rangle = -1$, $\langle x, y \rangle = 0$,
from \eqref{E:eq5} we get the following expression for the Gauss curvature:
\begin{equation} \label{E:eq11}
K=\varepsilon\,(\lambda^2-\mu^2-\nu_1\nu_2).
\end{equation}

The curvature of the normal connection is expressed in terms of the geometric functions
$\mu$, $\nu_1$, and $\nu_2$ by the next formula:
\begin{equation} \label{E:eq12}
\varkappa = \mu (\nu_1+\nu_2).
\end{equation}

The invariant function $k$, generated by the Weingarten map $\gamma$ according to  the formula
$k=\det\gamma=\ds{\frac{LN-M^{2}}{EG-F^{2}}}$, is expressed as
\begin{equation} \label{E:eq13}
k=4\mu^2\nu_1\nu_2.
\end{equation}

Since we consider Lorentz surfaces for which $\varkappa^2 - k > 0$, equalities \eqref{E:eq12}
and \eqref{E:eq13} imply that
$\mu \neq 0$ and $\nu_1 \neq \nu_2$.

The next statements follow directly from formulas \eqref{E:eq11} and \eqref{E:eq12}.

\vskip 2mm
\begin{prop}\label{P:flat}
Let $M^2_1$ be a Lorentz surface of general type in $\E^4_2$.
Then $M^2_1$ is flat if and only if $\lambda^2-\mu^2 = \nu_1\nu_2$.
\end{prop}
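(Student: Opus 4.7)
The plan is very short because almost all the work has already been done in establishing formula \eqref{E:eq11}. Recall that $M^2_1$ is said to be flat when its Gauss curvature $K$ vanishes identically. For a Lorentz surface of general type we have set up a geometric orthonormal tangent frame $\{x,y\}$ with $\langle x,x\rangle = 1$, $\langle y,y\rangle = -1$, $\langle x,y\rangle = 0$, together with the geometric normal frame $\{b,l\}$, and the expression for the Gauss curvature reduces (via the standard formula $K = \dfrac{\langle\sigma(x,x),\sigma(y,y)\rangle - \langle\sigma(x,y),\sigma(x,y)\rangle}{\langle x,x\rangle\langle y,y\rangle - \langle x,y\rangle^{2}}$ and the components of $\sigma$ read off from \eqref{E:eq5}) to
\begin{equation*}
K = \varepsilon\,(\lambda^{2}-\mu^{2}-\nu_{1}\nu_{2}),
\end{equation*}
as recorded in \eqref{E:eq11}.

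The strategy, then, is simply to invoke \eqref{E:eq11}. Since $\varepsilon = \pm 1$ is nowhere zero, the vanishing of $K$ is equivalent to the vanishing of the scalar factor $\lambda^{2}-\mu^{2}-\nu_{1}\nu_{2}$, which is precisely the stated condition $\lambda^{2}-\mu^{2}=\nu_{1}\nu_{2}$. Both implications are obtained at once: if $M^2_1$ is flat then $K\equiv 0$ forces $\lambda^{2}-\mu^{2}-\nu_{1}\nu_{2}=0$; conversely, if $\lambda^{2}-\mu^{2}=\nu_{1}\nu_{2}$ then \eqref{E:eq11} gives $K\equiv 0$, so $M^2_1$ is flat.

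There is essentially no obstacle: the only thing to verify is that \eqref{E:eq11} is indeed applicable, which is the case for any Lorentz surface of general type (that is, a surface free of flat and minimal points with either spacelike or timelike mean curvature vector), and this hypothesis is built into the statement of the proposition. Thus the proof is a one-line consequence of \eqref{E:eq11} together with the definition of flatness.
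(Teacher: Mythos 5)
Your proof is correct and is exactly the argument the paper intends: the proposition is stated there as an immediate consequence of formula \eqref{E:eq11}, and since $\varepsilon=\pm 1$, the vanishing of $K$ is equivalent to $\lambda^{2}-\mu^{2}-\nu_{1}\nu_{2}=0$. Nothing further is needed.
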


\vskip 2mm
\begin{prop}\label{P:constant K}
Let $M^2_1$ be a Lorentz surface of general type in $\E^4_2$.
Then $M^2_1$ has constant Gauss curvature  if and only if $\lambda^2-\mu^2 - \nu_1\nu_2 = const$.
\end{prop}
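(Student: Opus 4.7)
The plan is to read off the result directly from the expression for the Gauss curvature obtained earlier in the section, namely formula \eqref{E:eq11}, which states that for a Lorentz surface of general type
$$K = \varepsilon(\lambda^2 - \mu^2 - \nu_1\nu_2),$$
with $\varepsilon = \pm 1$ according to whether the mean curvature vector is spacelike or timelike.

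Given this identity, the proof is essentially a one-line observation: taking partial derivatives (or simply noting that $\varepsilon$ is a fixed sign $\pm 1$ on each connected component), one has $K = \mathrm{const}$ if and only if $\varepsilon(\lambda^2 - \mu^2 - \nu_1\nu_2) = \mathrm{const}$, which in turn is equivalent to $\lambda^2 - \mu^2 - \nu_1\nu_2 = \mathrm{const}$. No further computation is needed.

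There is essentially no obstacle here; the only small point to be careful about is the sign factor $\varepsilon$, but since it is locally constant ($\pm 1$) on the surface (the mean curvature vector is either spacelike everywhere or timelike everywhere under our standing hypothesis that $M^2_1$ is of general type), multiplying by $\varepsilon$ preserves the property of being constant. Hence the statement follows immediately from \eqref{E:eq11}.
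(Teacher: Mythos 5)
Your proposal is correct and coincides with the paper's argument: the paper likewise derives this proposition directly from the expression $K = \varepsilon(\lambda^2 - \mu^2 - \nu_1\nu_2)$ in \eqref{E:eq11}, with the sign $\varepsilon = \pm 1$ fixed on the surface. Nothing further is needed.
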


\vskip 2mm
\begin{prop}\label{P:flat normal}
Let $M^2_1$ be a Lorentz surface of general type in $\E^4_2$.
Then $M^2_1$ is of flat normal connection  if and only if $\nu_1 + \nu_2 = 0$.
\end{prop}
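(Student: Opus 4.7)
The plan is to reduce the statement to a direct computation using the identification of $\varkappa$ with the normal curvature and the explicit formula for $\varkappa$ in terms of the geometric functions. Recall that, by definition, $M^2_1$ has flat normal connection if the curvature tensor $R^{\bot}$ of $D$ vanishes identically. Since the normal bundle of $M^2_1$ is 2-dimensional with signature $(1,-1)$ and the normal frame $\{b,l\}$ spans it, the full vanishing of $R^{\bot}$ is equivalent to the vanishing of the single scalar $\langle R^{\bot}(x,y)n_2,n_1\rangle$ for any orthonormal tangent pair $\{x,y\}$, i.e.\ to the vanishing of the normal curvature.

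The earlier proposition established that this normal curvature coincides with the invariant $\varkappa$. Therefore flat normal connection is equivalent to $\varkappa\equiv 0$. Formula \eqref{E:eq12} gives the expression
\[
\varkappa=\mu(\nu_1+\nu_2),
\]
so $\varkappa\equiv 0$ is equivalent to $\mu(\nu_1+\nu_2)\equiv 0$. To conclude, I would invoke the standing hypothesis that $M^2_1$ is a Lorentz surface of general type satisfying $\varkappa^2-k>0$. From \eqref{E:eq12} and \eqref{E:eq13} we have $\varkappa^2-k=\mu^2(\nu_1+\nu_2)^2-4\mu^2\nu_1\nu_2=\mu^2(\nu_1-\nu_2)^2$, so $\varkappa^2-k>0$ forces $\mu\neq 0$ (and, incidentally, $\nu_1\neq\nu_2$), as is already remarked in the paper just after \eqref{E:eq13}. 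Dividing by the nowhere zero function $\mu$ yields $\nu_1+\nu_2=0$, and the converse is immediate from the same identity.

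The main (and in fact only) subtle point is ensuring $\mu\neq 0$, which is not part of the bare definition of flat normal connection but follows from the global standing assumptions on the class of surfaces under consideration; without it one would only get $\mu(\nu_1+\nu_2)=0$, which is strictly weaker than $\nu_1+\nu_2=0$. Beyond that, the proof is a one-line algebraic consequence of \eqref{E:eq12} and the previously proved identification of $\varkappa$ with the curvature of the normal connection.
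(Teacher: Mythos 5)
Your proof is correct and follows essentially the same route as the paper, which states that the proposition "follows directly from" the formula $\varkappa=\mu(\nu_1+\nu_2)$ together with the identification of $\varkappa$ with the normal curvature and the observation (made in the paper right after equation \eqref{E:eq13}) that $\varkappa^2-k>0$ forces $\mu\neq 0$. Your explicit verification that $\varkappa^2-k=\mu^2(\nu_1-\nu_2)^2$ just makes transparent the one point the paper leaves to the reader.
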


\vskip 2mm
\begin{prop}\label{P:constant normal}
Let $M^2_1$ be a Lorentz surface of general type in $\E^4_2$.
Then $M^2_1$ has constant normal curvature $\varkappa$ if and only if $\mu (\nu_1+\nu_2) = const$.
\end{prop}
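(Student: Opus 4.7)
The statement is essentially a direct consequence of formula \eqref{E:eq12}, which was established just before the sequence of propositions. My plan is to simply invoke this identity.

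Recall that in the preceding discussion, starting from the definition $\varkappa = -\tfrac{1}{2}\tr\gamma$ and using the Frenet-type derivative formulas \eqref{E:eq5} to compute $\sigma(x,x)$, $\sigma(x,y)$, $\sigma(y,y)$ in terms of the geometric functions $\nu_1, \nu_2, \lambda, \mu$, one arrives at the compact expression
\[
\varkappa = \mu(\nu_1 + \nu_2).
\]
This is precisely the content of \eqref{E:eq12}, which has already been derived and may be cited directly.

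Given this identity, the proof reduces to a tautology: the function $\varkappa$ is constant on $M^2_1$ if and only if $\mu(\nu_1 + \nu_2)$ is constant on $M^2_1$. So the argument is just: \emph{By \eqref{E:eq12}, $\varkappa = \mu(\nu_1+\nu_2)$, whence $\varkappa = \mathrm{const}$ if and only if $\mu(\nu_1+\nu_2) = \mathrm{const}$.}

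There is no real obstacle here; the entire analytical content was absorbed into the earlier derivation of \eqref{E:eq12}. In the same spirit, Propositions \ref{P:flat}, \ref{P:constant K}, and \ref{P:flat normal} all follow immediately from \eqref{E:eq11} and \eqref{E:eq12}, and this proposition is the fourth in that parallel series. The only thing to be careful about is not to confuse the curvature $\varkappa$ of the normal connection (an invariant of $M^2_1$) with the function $\mu(\nu_1+\nu_2)$ expressed in terms of the geometric moving frame; but the coincidence of the two has already been proved, so the equivalence stated in the proposition is immediate.
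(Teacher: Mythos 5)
Your proof is correct and matches the paper exactly: the authors state that this proposition (together with Propositions \ref{P:flat}, \ref{P:constant K}, and \ref{P:flat normal}) follows directly from formulas \eqref{E:eq11} and \eqref{E:eq12}, and your argument is precisely that appeal to the identity $\varkappa = \mu(\nu_1+\nu_2)$.
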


\vskip 2mm
Now we shall characterize some basic classes of surfaces satisfying special conditions on
the mean curvature vector field.
It follows from \eqref{E:eq5} that the mean curvature vector field $H$ is expressed by the formula:
\begin{equation} \label{E:eq14}
H = \frac{\varepsilon(\nu_1-\nu_2)}{2}\; b.
\end{equation}
Hence, the length of the mean curvature vector field is:
\begin{equation*} \label{E:eq15}
\|H\| = \varepsilon \sqrt{| \langle H, H \rangle|}= \varepsilon \frac{|\nu_1-\nu_2|}{2}.
\end{equation*}

\vskip 2mm
\begin{prop}\label{P:CMC}
Let $M^2_1$ be a Lorentz surface of general type in $\E^4_2$.
Then $M^2_1$ has non-zero constant mean curvature   if and only if $\nu_1 -\nu_2 = const \neq 0$.
\end{prop}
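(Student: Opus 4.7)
The plan is to read off the equivalence directly from the explicit formula for the mean curvature vector that we have already obtained. By \eqref{E:eq14} we have $H = \dfrac{\varepsilon(\nu_1-\nu_2)}{2}\,b$, where $b$ is a unit normal vector field with $\langle b,b\rangle=\varepsilon=\pm 1$. Therefore
\[
\|H\| \;=\; \sqrt{|\langle H,H\rangle|} \;=\; \frac{|\nu_1-\nu_2|}{2}.
\]

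First I would prove the easy direction: if $\nu_1-\nu_2$ is a non-zero constant, then $\|H\|$ is a non-zero constant, so $M^2_1$ has non-zero constant mean curvature by definition.

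For the converse, I would assume that $M^2_1$ has non-zero constant mean curvature, i.e. $\|H\|=c$ for some constant $c>0$. Then $|\nu_1-\nu_2|=2c$ on $M^2_1$. Since $\nu_1$ and $\nu_2$ are smooth functions and $\nu_1-\nu_2$ does not vanish on $M^2_1$ (otherwise $c=0$), the continuous non-vanishing function $\nu_1-\nu_2$ must keep a constant sign on the connected domain of the surface; consequently $\nu_1-\nu_2=\pm 2c$ is a (non-zero) constant.

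There is no real obstacle here: the content of the proposition is simply that the geometric function $\nu_1-\nu_2$ encodes the length of $H$ up to the factor $1/2$. The only subtle point is the sign issue in passing from $|\nu_1-\nu_2|=\mathrm{const}$ to $\nu_1-\nu_2=\mathrm{const}$, which is resolved by the non-vanishing hypothesis together with continuity on the connected parameter domain.
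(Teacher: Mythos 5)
Your proof is correct and follows essentially the same route as the paper: the proposition is read off directly from the formula $H=\frac{\varepsilon(\nu_1-\nu_2)}{2}\,b$, which gives $\|H\|$ as $\frac{|\nu_1-\nu_2|}{2}$ up to the sign convention $\varepsilon$. Your extra remark on passing from $|\nu_1-\nu_2|=\mathrm{const}$ to $\nu_1-\nu_2=\mathrm{const}$ via continuity and connectedness (using that $\nu_1\neq\nu_2$ on a surface of general type) is a correct and welcome precision that the paper leaves implicit.
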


Using \eqref{E:eq14} and \eqref{E:eq5} we get that
\begin{equation*}\label{E:eq16}
\begin{array}{l}
\vspace{2mm}
D_x H = \ds{\varepsilon  x\left(\frac{\nu_1 -\nu_2}{2}\right)  b - \frac{\nu_1 -\nu_2}{2} \, \beta_1 \, l};\\
\vspace{2mm}
D_y H = \ds{\varepsilon  y\left(\frac{\nu_1 -\nu_2}{2}\right)  b - \frac{\nu_1 -\nu_2}{2} \, \beta_2 \, l}.
\end{array}
\end{equation*}
The last equalities imply that $H$ is parallel in the normal bundle (i.e. $D H = 0$ holds identically)
if and only if $\beta_1 = \beta_2 = 0$ and $\nu_1 -\nu_2 = const$.
So, the next statement holds true.

\vskip 2mm
\begin{prop}\label{P:CMC}
Let $M^2_1$ be a Lorentz surface of general type in $\E^4_2$.
Then $M^2_1$ has parallel mean curvature vector field  if and only if $\beta_1 = \beta_2 = 0$ and
$\nu_1 -\nu_2 = const$.
\end{prop}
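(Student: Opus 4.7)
The plan is to read off the statement directly from the two expressions for $D_xH$ and $D_yH$ displayed just before the proposition, so the work is to justify why those expressions give an \emph{if and only if}.

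First I would recall that for a Lorentz surface of general type we have $H = \tfrac{\varepsilon(\nu_1-\nu_2)}{2}\,b$ by \eqref{E:eq14}, and that on such a surface $H$ is nonzero by assumption, so $\nu_1-\nu_2\neq 0$ at every point. Next, to compute $D_xH$, I would apply $\nabla'_x$ to the expression \eqref{E:eq14}, use the product rule, and then pick out the component in the normal bundle. The derivative of the coefficient contributes only the normal vector $b$, while $\nabla'_x b$ is given by \eqref{E:eq5}; its tangential part $-\nu_1 x + \lambda y$ is absorbed by the shape operator, and its normal part is $-\varepsilon\beta_1 l$. Multiplying by the scalar coefficient and collecting terms gives exactly the displayed formula
\[
D_xH = \varepsilon\, x\!\left(\tfrac{\nu_1-\nu_2}{2}\right) b - \tfrac{\nu_1-\nu_2}{2}\,\beta_1\, l,
\]
and an identical computation using $\nabla'_y b$ yields the analogous expression for $D_yH$.

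Now I would extract the equivalence. Since $\{b,l\}$ is a basis of the normal bundle and $b,l$ are linearly independent (they have nonzero scalar squares $\varepsilon$ and $-\varepsilon$), the condition $D_xH=0$ is equivalent to the simultaneous vanishing of both coefficients, i.e.\ $x(\nu_1-\nu_2)=0$ and $(\nu_1-\nu_2)\beta_1=0$; similarly $D_yH=0$ is equivalent to $y(\nu_1-\nu_2)=0$ and $(\nu_1-\nu_2)\beta_2=0$. Using that $\nu_1-\nu_2\neq 0$, these simplify to $\beta_1=\beta_2=0$ together with $x(\nu_1-\nu_2)=y(\nu_1-\nu_2)=0$. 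Because $\{x,y\}$ is a basis of the tangent space at each point, the latter two equations are equivalent to $\nu_1-\nu_2$ being a constant. Conversely, if $\beta_1=\beta_2=0$ and $\nu_1-\nu_2=\mathrm{const}$, both displayed formulas vanish identically, giving $DH=0$.

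The calculation is essentially routine once \eqref{E:eq5} and \eqref{E:eq14} are in hand; there is no real obstacle, but the one point that must be stated carefully is the nonvanishing of $\nu_1-\nu_2$, which is what lets the factor $(\nu_1-\nu_2)\beta_i=0$ be solved as $\beta_i=0$ rather than being satisfied trivially by $\nu_1=\nu_2$. This nonvanishing is guaranteed by the standing assumption that $M^2_1$ is of general type (so $H\neq 0$), and it should be invoked explicitly at the step where one concludes $\beta_1=\beta_2=0$.
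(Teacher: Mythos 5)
Your proof is correct and follows essentially the same route as the paper: compute $D_xH$ and $D_yH$ from \eqref{E:eq14} and \eqref{E:eq5} and read off the equivalence from the vanishing of the $b$- and $l$-components. Your explicit remark that $\nu_1-\nu_2\neq 0$ (guaranteed for surfaces of general type, and also by $\varkappa^2-k>0$) is the one point the paper leaves implicit, and you are right that it is needed to pass from $(\nu_1-\nu_2)\beta_i=0$ to $\beta_i=0$.
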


\vskip 2mm
The geometric meaning of the invariant $\lambda$ is connected with the notion of allied mean
curvature vector field
defined by B.-Y. Chen for submanifolds of Riemannian manifolds.
Let $M$ be an $n$-dimensional submanifold of $(n+m)$-dimensional
Riemannian manifold $\widetilde{M}$ and $\xi$ be a normal vector
field of $M$. The \emph{allied
vector field} $a(\xi)$ of $\xi$ is defined by the formula
$$a(\xi)=\ds{\frac{\|\xi\|}{n}\sum_{k=2}^{m}\{\tr(A_{1} \circ A_{k})\}\xi_{k}},$$
where $\{\xi_{1}=\ds{\frac{\xi}{\|\xi\|}},\xi_{2}, \dots, \xi_{m}\}$ is an
orthonormal base of the normal space of $M$, and $A_{i}=A_{\xi_{i}},\,\,i=1,\dots,m$
is the shape operator with respect to $\xi_{i}$. In particular, the
allied vector field $a(H)$ of the mean curvature vector field $H$
is called the \emph{allied mean curvature vector field} of $M$
in $\widetilde{M}$ \cite{Chen1}.
B.-Y. Chen
defined  the $\mathcal{A}$-submanifolds to be those submanifolds
of $\widetilde{M}$ for which
 $a(H)$ vanishes identically.
The $\mathcal{A}$-submanifolds are also
called \emph{Chen submanifolds}. It is easy to see that minimal
submanifolds, pseudo-umbilical submanifolds and hypersurfaces are
Chen submanifolds. These Chen submanifolds are said to be trivial
$\mathcal{A}$-submanifolds.

The notion of allied mean curvature vector field is extended by S. Haesen and  M. Ortega
to the case when the normal space is a two-dimensional
Lorentz space \cite{Haesen-Ort-3}.

In the following proposition we characterize non-trivial  Chen surfaces in $\E^4_2$.
\vskip 2mm
\begin{prop}\label{P:Chen surface}
Let $M^2_1$ be a Lorentz surface of general type in $\E^4_2$.
Then $M^2_1$ is a non-trivial Chen surface if and only if $\lambda = 0$.
\end{prop}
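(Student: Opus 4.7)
The plan is to unwind the definition of the allied mean curvature vector field in the geometric frame $\{x,y,b,l\}$, reading off the second fundamental form from the Frenet-type formulas \eqref{E:eq5}. Since $M^2_1$ has dimension $2$ and codimension $2$, and since $\xi_1 := b$ (the unit vector along $H$) together with $\xi_2 := l$ forms an orthonormal basis of the normal space, the sum in the definition of $a(H)$ collapses to its single $k=2$ term, so
$$a(H) \;=\; \frac{\|H\|}{2}\,\tr(A_b \circ A_l)\, l.$$
The entire proof thus reduces to computing $\tr(A_b\circ A_l)$ in terms of the geometric functions.

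First I would read off from \eqref{E:eq5} that
$$\sigma(x,x) = \varepsilon\nu_1\, b, \qquad \sigma(x,y) = \varepsilon\lambda\, b - \varepsilon\mu\, l, \qquad \sigma(y,y) = \varepsilon\nu_2\, b.$$
Using $\langle A_\xi X,Y\rangle = \langle \sigma(X,Y),\xi\rangle$ together with $\langle x,x\rangle = 1$, $\langle y,y\rangle = -1$, $\langle b,b\rangle = \varepsilon$, $\langle l,l\rangle = -\varepsilon$, I would then solve for the shape operators directly in the basis $\{x,y\}$, obtaining
$$A_b x = \nu_1 x - \lambda y, \quad A_b y = \lambda x - \nu_2 y, \quad A_l x = -\mu\, y, \quad A_l y = \mu\, x.$$
A single $2\times 2$ matrix multiplication then yields $\tr(A_b\circ A_l) = -2\lambda\mu$, hence
$$a(H) \;=\; -\lambda\,\mu\,\|H\|\,l.$$

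Finally, $M^2_1$ is a Chen surface exactly when $a(H)\equiv 0$, that is, when $\lambda\mu\|H\|\equiv 0$. The standing hypotheses kill two of the three factors: by general type $H$ is everywhere non-zero, so $\|H\|\neq 0$; and by formulas \eqref{E:eq12} and \eqref{E:eq13} the inequality $\varkappa^2-k>0$ rewrites as $\mu^2(\nu_1-\nu_2)^2>0$, forcing $\mu\neq 0$. Therefore the Chen condition is equivalent to $\lambda=0$, and non-triviality is automatic since general type forbids minimality and the codimension is $2$ (so the surface is not a hypersurface). The main obstacle is purely bookkeeping: one must carefully track the sign from $\langle y,y\rangle = -1$ on the tangent side and the signs $\varepsilon,\,-\varepsilon$ arising from $\langle b,b\rangle,\,\langle l,l\rangle$ on the normal side when inverting the shape-operator identity. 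Once those signs are in order, everything reduces to one small matrix product.
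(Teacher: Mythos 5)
Your proposal is correct and follows essentially the same route as the paper: both compute the shape operators $A_b$, $A_l$ in the geometric frame from \eqref{E:eq5}, obtain $\tr(A_b\circ A_l)=-2\lambda\mu$, and conclude via $\mu\neq 0$ and $\nu_1\neq\nu_2$ (equivalently $\|H\|\neq 0$), which follow from $\varkappa^2-k=4\mu^2\left(\tfrac{\nu_1-\nu_2}{2}\right)^2>0$. Your sign bookkeeping and the resulting operators match the paper's exactly.
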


\begin{proof}
Let $M^2_1$ be a Lorentz surface of general type in $\E^4_2$ and $\{b, l\}$ be the geometric
normal frame field defined in
Section \ref{S:Fund-Theorem}.
The allied mean curvature vector field is given by the formula
\begin{equation} \label{E:eq17}
a(H)=\ds{\frac{\|H\|}{2} \tr(A_b \circ A_l)\}l},
\end{equation}
where $A_b$ and $A_l$ are the shape operators corresponding to $b$ and $l$, respectively.
Using equalities \eqref{E:eq5}  we obtain
\begin{equation*}
\begin{array}{ll}
\vspace{2mm}
A_b(x) = \nu_{1}\,x -\lambda\,y; & \qquad A_l(x) = -\mu\,y;\\
\vspace{2mm}
A_b(y) = \lambda\,x - \nu_{2}\,y; & \qquad A_l(y) = \mu\,x.\\
\end{array}
\end{equation*}
Hence, $ \tr(A_b \circ A_l) = - 2\lambda \mu$. Now applying \eqref{E:eq17} and using  \eqref{E:eq14}, we get
$$a(H)=\ds{-\varepsilon \frac{|\nu_1-\nu_2|}{2} \lambda\,\mu \,l}.$$
Since $\mu \neq 0$ and $\nu_1 \neq \nu_2$, we conclude that  $a(H) = 0$ if and
only if $\lambda = 0$. This gives the geometric meaning of the
invariant $\lambda$:  $M^2_1$ is a non-trivial  Chen
surface if and only if the geometric function $\lambda$ is zero.

\end{proof}

\section{Lorentz surfaces with parallel normalized mean curvature vector field}\label{S:Parallel b}

In this section we shall focus our attention on a special class of
Lorentz surfaces in $\E^4_2$, namely surfaces with parallel normalized mean curvature vector field.

Let $M^2_1$ be a Lorentz surface of general type. In the previous section we proved that
the mean curvature vector field $H$ is parallel if and only if $\beta_1 = \beta_2 =0$ and $\nu_1 - \nu_2 = const$.
Now we shall consider the class of surfaces satisfying the conditions $\beta_1 = \beta_2 =0$.
It follows from \eqref{E:eq5} that $\beta_1 = \beta_2 = 0$ if and only if $D_xb = D_yb = 0$
(or equivalently, $D_xl = D_yl = 0$).
Since $b$ is a unit normal vector field in the direction of the mean curvature vector field,
the conditions  $\beta_1 = \beta_2 = 0$
characterize surfaces with parallel normalized mean curvature vector field. Hence, we have the following statement.

\vskip 2mm
\begin{prop}\label{P:Parallel b}
Let $M^2_1$ be a Lorentz surface of general type in $\E^4_2$.
Then $M^2_1$ has parallel normalized mean curvature vector field  if and only if $\beta_1 = \beta_2 = 0$.
\end{prop}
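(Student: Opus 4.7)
The plan is to read off the claim directly from the unified Frenet-type derivative formulas \eqref{E:eq5} together with the definition of the unit vector field $b$.

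First I would recall that by construction $b = \varepsilon H / \sqrt{\varepsilon \langle H, H \rangle}$ is the unit normal vector field in the direction of the mean curvature vector $H$, so by definition $M^2_1$ has parallel normalized mean curvature vector field if and only if $Db = 0$, i.e.\ $D_x b = 0$ and $D_y b = 0$ for the geometric tangent frame $\{x, y\}$.

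Next I would extract the normal components of $\nabla'_x b$ and $\nabla'_y b$ from \eqref{E:eq5}. The relevant formulas are
\begin{equation*}
\nabla'_x b = -\nu_1 x + \lambda y - \varepsilon\,\beta_1\, l, \qquad
\nabla'_y b = -\lambda x + \nu_2 y - \varepsilon\,\beta_2\, l.
\end{equation*}
Splitting into tangent and normal parts via the Weingarten formula $\nabla'_X b = -A_b X + D_X b$, and using that $\{b, l\}$ is an orthonormal (up to sign) basis of the normal space with $\langle b, l \rangle = 0$, we see
\begin{equation*}
D_x b = -\varepsilon\,\beta_1\, l, \qquad D_y b = -\varepsilon\,\beta_2\, l.
\end{equation*}

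Since $l$ is a nowhere-zero normal vector field and $\varepsilon = \pm 1$, the condition $D_x b = D_y b = 0$ is equivalent to $\beta_1 = \beta_2 = 0$. Because $\{x, y\}$ spans the tangent space at every point, this is equivalent to $Db = 0$, which by the initial remark is equivalent to $M^2_1$ having parallel normalized mean curvature vector field. There is no real obstacle here: the proposition is essentially a reformulation of the definitions of $\beta_1, \beta_2$ as the coefficients measuring the $l$-component of $\nabla'_x b$ and $\nabla'_y b$, and the only thing to verify carefully is that $b$ is indeed (up to sign $\varepsilon$) the normalized mean curvature vector, which was arranged in Subsections \ref{S:spacelikeH} and \ref{S:timelikeH}.
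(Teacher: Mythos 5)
Your proof is correct and follows essentially the same route as the paper: both read off $D_x b = -\varepsilon\,\beta_1\, l$ and $D_y b = -\varepsilon\,\beta_2\, l$ from the unified Frenet-type formulas \eqref{E:eq5} and use that $b$ is by construction the unit normal field in the direction of $H$, so that $Db=0$ is equivalent to $\beta_1=\beta_2=0$. Nothing is missing.
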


Note that the condition to have parallel normalized mean curvature vector field is weaker than
the condition to have parallel mean curvature vector field. In what follows we shall study Lorentz surfaces
with parallel normalized mean
curvature vector field, but not with parallel mean curvature vector field, i.e.
$\beta_1 = \beta_2 =0$, $\Vert H \Vert \neq const$. For these surfaces we shall introduce canonical parameters.

\begin{defn}\label{D:Canonocal parameters}
Let $M^2_1$ be a Lorentz surface of general type with parallel normalized mean
curvature vector field. The parameters $(u,v)$ of $M^2_1$ are said to be \textit{canonical}, if
$$E = \ds{\frac{1}{|\mu|}};\qquad F = 0; \qquad G = \ds{-\frac{1}{|\mu|}}.$$
\end{defn}

\begin{thm}\label{T:Canonocal parameters}
Each Lorentz surface of general type with  parallel normalized mean
curvature vector field in $\E^4_2$ locally admits  canonical parameters.
\end{thm}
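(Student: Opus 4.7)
The plan is to work in principal-line coordinates and to extract a separation of variables from the parallel normalized mean curvature condition, so that a one-variable reparametrization of each coordinate suffices to normalize the metric. Concretely, first I would choose local coordinates $(u,v)$ on $M^2_1$ in which the parametric lines are principal, so $F=0$ and $M=0$; the geometric frame $\{x,y,b,l\}$ of Subsections~\ref{S:spacelikeH}--\ref{S:timelikeH} is then available with $x=z_u/\sqrt{E}$ and $y=z_v/\sqrt{-G}$. By Proposition~\ref{P:Parallel b}, the parallel normalized mean curvature assumption is equivalent to $\beta_1=\beta_2=0$, so the first two equations in \eqref{E:eq5-i} collapse to $2\mu\,\gamma_2=x(\mu)$ and $2\mu\,\gamma_1=y(\mu)$. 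Combining these with the identities $\gamma_1=-y(\ln\sqrt{E})$ and $\gamma_2=-x(\ln\sqrt{-G})$, and using $\mu\neq 0$ (which holds because $\varkappa^2-k>0$, as noted after \eqref{E:eq13}), yields
$$x\bigl(\ln(|\mu|\,(-G))\bigr)=0,\qquad y\bigl(\ln(|\mu|\,E)\bigr)=0.$$
Since in principal coordinates $x$ is collinear with $\partial_u$ and $y$ with $\partial_v$, it follows that $|\mu|\,E=g(u)$ and $-|\mu|\,G=f(v)$ for smooth positive functions $g$ and $f$ of a single variable.

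Next I would perform a separable change of parameters $\bar u=\bar u(u)$, $\bar v=\bar v(v)$ defined, up to additive constants, by $d\bar u/du=\sqrt{g(u)}$ and $d\bar v/dv=\sqrt{f(v)}$; these are local diffeomorphisms because $f,g>0$. By the transformation rules for $(E,F,G)$ and $(L,M,N)$ in Section~\ref{S:Weingarten}, a separable change preserves both $F=0$ and $M=0$, so the new parametric lines are again principal. A direct computation using $z_{\bar u}=z_u\,u_{\bar u}$ and $z_{\bar v}=z_v\,v_{\bar v}$ then gives
$$\bar E=E\,(u_{\bar u})^2=\frac{E}{g(u)}=\frac{1}{|\mu|},\qquad \bar G=G\,(v_{\bar v})^2=-\frac{1}{|\mu|},\qquad \bar F=0,$$
which are the canonical conditions of Definition~\ref{D:Canonocal parameters}.

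The main obstacle is the intermediate identification: extracting from the Codazzi-type integrability conditions \eqref{E:eq5-i} (with $\beta_1=\beta_2=0$) the fact that $|\mu|\,E$ is a function of $u$ alone and $-|\mu|\,G$ is a function of $v$ alone. Once this separation of variables is in place, the remainder is a pair of one-variable ODE integrations, and the desired canonical parametrization follows.
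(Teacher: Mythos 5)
Your proof is correct and follows essentially the same route as the paper's: from the integrability conditions with $\beta_1=\beta_2=0$ you identify $\gamma_1=\tfrac12 y(\ln|\mu|)$ and $\gamma_2=\tfrac12 x(\ln|\mu|)$, compare with $\gamma_1=-y(\ln\sqrt{E})$, $\gamma_2=-x(\ln\sqrt{-G})$ to separate variables ($|\mu|E$ a function of $u$, $-|\mu|G$ a function of $v$), and then normalize by the same one-variable reparametrizations $d\bar u=\sqrt{g(u)}\,du$, $d\bar v=\sqrt{f(v)}\,dv$. No gaps.
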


\begin{proof}
Using the Gauss and Codazzi equations, from \eqref{E:eq5} we obtain that the geometric functions
$\gamma_1, \, \gamma_2, \, \nu_1,\, \nu_2, \,
\lambda, \, \mu, \, \beta_1$, $\beta_2$ of the surface satisfy the integrability
conditions given in formulas \eqref{E:eq5-i}. Putting $\beta_1 = \beta_2 =0$ in \eqref{E:eq5-i}, we get
\begin{equation}  \label{E:Eq5-a}
\begin{array}{l}
\vspace{2mm}
2\mu\, \gamma_2 = x(\mu);\\
\vspace{2mm}
2\mu\, \gamma_1 = y(\mu);\\
\vspace{2mm}
2\lambda\, \gamma_2 - (\nu_1 + \nu_2)\,\gamma_1 = x(\lambda) - y(\nu_1);\\
\vspace{2mm}
2\lambda\, \gamma_1 - (\nu_1 + \nu_2)\,\gamma_2 = - x(\nu_2) + y(\lambda);\\
\vspace{2mm}
(\nu_1 + \nu_2)\,\mu = 0; \\
\vspace{2mm}
\varepsilon (\lambda^2 -\mu^2 -\nu_1 \,\nu_2) = x(\gamma_2) - y(\gamma_1) + (\gamma_1)^2 - (\gamma_2)^2.
\end{array}
\end{equation}

The first and second equalities of  \eqref{E:Eq5-a} imply $\gamma_1 =
\ds{\frac{1}{2} y(\ln |\mu|)}, \,\, \gamma_2 = \ds{\frac{1}{2} x(\ln |\mu|)}$.
On the other hand,  $\gamma_1 = - y(\ln \sqrt{E}), \,\, \gamma_2 = - x(\ln \sqrt{-G})$.
Hence, $x(\ln|\mu|(-G)) = 0$ and $y(\ln|\mu| E) = 0$, which imply that
$E |\mu|$ does not depend on $v$, and $G |\mu|$ does not depend on $u$.
Hence,
there exist functions $\varphi(u) >0 $ and $ \psi(v) >0$, such that
$$E |\mu| = \varphi(u); \qquad -G|\mu| = \psi(v).$$
Under the following change of the parameters:
\begin{equation*}  \label{E:Eq6}
\begin{array}{l}
\vspace{2mm}
\overline{u} = \ds{\int_{u_0}^u \sqrt{\varphi(u)}\, du}
+ \overline{u}_0, \quad \overline{u}_0 = const\\
[2mm]
\overline{v} = \ds{\int_{v_0}^v  \sqrt{\psi(v)}\, dv + \overline{v}_0},
\quad \overline{v}_0 = const
\end{array}
\end{equation*}
we obtain
$$\overline{E} = \ds{\frac{1}{|\mu|}}; \qquad \overline{F} = 0;
\qquad \overline{G} = \ds{-\frac{1}{|\mu|}},$$
which imply that the  parameters $(\overline{u},\overline{v})$ are canonical.

\end{proof}

It is clear that the canonical parameters are determined locally  up to an orientation.

\begin{prop}\label{P:prop}
Each Lorentz surface of general type with  parallel normalized mean
curvature vector field in $\E^4_2$ is a surface with flat normal connection.
\end{prop}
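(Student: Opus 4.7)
The plan is to exploit the integrability condition that was already isolated in the proof of Theorem \ref{T:Canonocal parameters}. Specifically, from the system \eqref{E:Eq5-a}, obtained by setting $\beta_1 = \beta_2 = 0$ in the general integrability conditions \eqref{E:eq5-i}, the fifth equation reads
\[
(\nu_1 + \nu_2)\,\mu = 0.
\]
This is the crucial algebraic identity that parallelism of the normalized mean curvature vector forces on the geometric functions.

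Next, I would invoke the standing assumption that $M^2_1$ is a Lorentz surface of general type with $\varkappa^2 - k > 0$. As noted just after formulas \eqref{E:eq12} and \eqref{E:eq13}, this assumption implies $\mu \neq 0$ (and $\nu_1 \neq \nu_2$) at every point of $M^2_1$. Combined with the vanishing product above, this yields
\[
\nu_1 + \nu_2 = 0
\]
identically on the surface.

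Finally, I would appeal to Proposition \ref{P:flat normal}, which characterizes Lorentz surfaces of general type with flat normal connection precisely by the condition $\nu_1 + \nu_2 = 0$. Equivalently, by formula \eqref{E:eq12}, the normal curvature $\varkappa = \mu(\nu_1+\nu_2)$ vanishes identically, so the normal connection is flat. There is no serious obstacle here: the entire argument is a two-line deduction, and the only thing to be careful about is to explicitly cite the non-vanishing of $\mu$, which rests on the general-type hypothesis $\varkappa^2 - k > 0$ rather than on anything derived in this section.
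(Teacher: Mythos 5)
Your proposal is correct and follows exactly the paper's own argument: the fifth integrability condition in \eqref{E:Eq5-a} gives $(\nu_1+\nu_2)\mu=0$, the general-type hypothesis forces $\mu\neq 0$, hence $\nu_1+\nu_2=0$ and $\varkappa=\mu(\nu_1+\nu_2)=0$. Your extra care in tracing the non-vanishing of $\mu$ back to $\varkappa^2-k>0$ via \eqref{E:eq12} and \eqref{E:eq13} is a welcome clarification, but the route is the same.
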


\begin{proof} If $M^2_1$ is a Lorentz surface of general type with  parallel normalized mean
curvature vector field, then formulas \eqref{E:Eq5-a} hold. Since $\mu \neq 0$, from the fifth
equality of \eqref{E:Eq5-a} we get that $\nu_1 + \nu_2 = 0$. Hence, the normal connection of
the surface is $\varkappa = 0$, i.e. $M^2_1$ is a surface with flat normal connection.

\end{proof}

Now let $M^2_1: z = z(u,v), \,\, (u,v) \in {\mathcal D}$ be a Lorentz  surface of general type
with  parallel normalized mean
curvature vector field  and $(u,v)$ be canonical parameters. Since $\nu_1 + \nu_2 = 0$, we denote
$\nu := \nu_1 = - \nu_2$.
The functions $\gamma_1$ and $\gamma_2$
are expressed as follows:
$$\gamma_1 = \frac{1}{2} y(\ln |\mu|) = \left(\sqrt{|\mu|}\right)_v; \qquad \gamma_2
= \frac{1}{2} x(\ln |\mu|) = \left(\sqrt{|\mu|}\right)_u.$$
The third and fourth equalities of \eqref{E:Eq5-a} imply the following partial differential
equations for the functions $\lambda$, $\mu$, and $\nu$:
\begin{equation}  \notag
\begin{array}{l}
\vspace{2mm}
\nu_u = -\lambda_v + \lambda (\ln|\mu|)_v;\\
\vspace{2mm}
\nu_v = \lambda_u - \lambda (\ln|\mu|)_u.\\
\end{array}
\end{equation}
The last  equality of  \eqref{E:Eq5-a}  implies
\begin{equation}  \notag
\varepsilon (\lambda^2 - \mu^2 + \nu^2) = \frac{1}{2}|\mu| \Delta^h \ln |\mu|,
\end{equation}
where $\Delta^h = \ds{\frac{\partial^2}{\partial u^2} - \frac{\partial^2}{\partial v^2}}$
is the hyperbolic Laplace operator.

\vskip 3mm
The fundamental existence and uniqueness theorem for the class of
Lorentz surfaces of general type  with  parallel normalized mean curvature vector field
can be formulated in terms of canonical parameters as follows.

\begin{thm}\label{T:Fundamental Theorem-canonical}
Let $\lambda(u,v)$, $\mu(u,v)$ and $\nu(u,v)$ be  smooth functions, defined in a domain
${\mathcal D}, \,\, {\mathcal D} \subset {\R}^2$, and satisfying the conditions
$$\begin{array}{l}
\vspace{2mm}
\mu  \neq 0, \quad \nu \neq const;\\
\vspace{2mm}
\nu_u = -\lambda_v +\lambda (\ln|\mu|)_v;\\
\vspace{2mm}
\nu_v = \lambda_u - \lambda (\ln|\mu|)_u;\\
\vspace{2mm}
\varepsilon (\lambda^2 - \mu^2 +\nu^2) = \ds{\frac{1}{2}|\mu| \Delta^h \ln |\mu|}.
\end{array} $$
If $\{x_{0},\,y_{0},\,b_{0},\,l_{0}\}$ is an orthonormal frame at
a point $p_{0}\in\E^4_2$ (with $\langle b_0, b_0 \rangle = \varepsilon$;
$\langle l_0, l_0 \rangle = -\varepsilon$; $\langle b_0, l_0 \rangle = 0$), then there exist a
subdomain ${\mathcal D}_0 \subset {\mathcal D}$
and a unique Lorentz surface
$M^2_1: z = z(u,v), \,\, (u,v) \in {\mathcal D}_0$ of general type with parallel normalized mean
curvature vector field, such that $M^2_1$ passes through $p_0$,  the functions   $\lambda(u,v)$,
$\mu(u,v)$, $\nu(u,v)$ are the geometric functions
of $M^2$ and $\{x_{0},\,y_{0},\,b_{0},\,l_{0}\}$ is the geometric
frame of $M^2_1$ at the point $p_0$.
Furthermore, $(u,v)$ are canonical parameters of $M^2$.
\end{thm}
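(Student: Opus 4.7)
The plan is to reduce this theorem to Theorem \ref{T:Fund-Theorem} by substituting the canonical data into the general integrability system \eqref{E:eq6}. The translation to the general setting is the following: set $\beta_{1}=\beta_{2}=0$ (this encodes the parallel normalized mean curvature condition by Proposition \ref{P:Parallel b}) and set $\nu_{1}=\nu$, $\nu_{2}=-\nu$ (since Proposition \ref{P:prop} forces flat normal connection, i.e.\ $\nu_{1}+\nu_{2}=0$). In canonical parameters we prescribe
\[
E=\frac{1}{|\mu|},\qquad F=0,\qquad G=-\frac{1}{|\mu|},\qquad
\gamma_{1}=\bigl(\sqrt{|\mu|}\bigr)_{v},\qquad \gamma_{2}=\bigl(\sqrt{|\mu|}\bigr)_{u},
\]
these last two being dictated by the Gauss-type formulas $\gamma_{1}=-y(\ln\sqrt{E})$, $\gamma_{2}=-x(\ln\sqrt{-G})$ together with the first two identities of \eqref{E:Eq5-a}.

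Next, I would check that each of the eight conditions of \eqref{E:eq6} is satisfied by this choice of data. With $\beta_{1}=\beta_{2}=0$ and $\nu_{1}+\nu_{2}=0$, the first equation of \eqref{E:eq6} becomes $\sqrt{E}=\mu_{u}/(2\mu\gamma_{2})$; substituting $\gamma_{2}=\mathrm{sgn}(\mu)\,\mu_{u}/(2\sqrt{|\mu|})$ yields $\sqrt{E}=1/\sqrt{|\mu|}$, which is consistent with the canonical choice. The second equation is handled symmetrically. The third and fourth equations of \eqref{E:eq6} are automatic from the formulas for $\gamma_{1},\gamma_{2}$ as logarithmic derivatives of $\sqrt{|\mu|}$. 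The seventh equation ($\beta_{1}=\beta_{2}=0$ gives $(\nu_{1}+\nu_{2})\mu=0$) is satisfied trivially because $\nu_{1}+\nu_{2}=0$. The fifth and sixth equations of \eqref{E:eq6}, after substituting $\sqrt{E}=\sqrt{-G}=1/\sqrt{|\mu|}$ and $(\nu_{1}+\nu_{2})=0$, reduce (after a short manipulation using $\gamma_{1}=\frac{1}{2}\partial_{v}\ln|\mu|\cdot\sqrt{-G}$ type rewrites) to
\[
\nu_{u}=-\lambda_{v}+\lambda(\ln|\mu|)_{v},\qquad
\nu_{v}=\lambda_{u}-\lambda(\ln|\mu|)_{u},
\]
which are exactly the second and third hypotheses. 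The last equation of \eqref{E:eq6} becomes, after substitution,
\[
\varepsilon(\lambda^{2}-\mu^{2}+\nu^{2})=\tfrac{1}{2}|\mu|\,\Delta^{h}\ln|\mu|,
\]
which is the fourth hypothesis of the theorem.

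Once all conditions of \eqref{E:eq6} are verified, Theorem \ref{T:Fund-Theorem} provides a subdomain ${\mathcal D}_{0}\subset{\mathcal D}$ and a unique Lorentz surface $M^{2}_{1}\colon z=z(u,v)$ through $p_{0}$ having the prescribed geometric functions $\gamma_{1},\gamma_{2},\nu_{1}=\nu,\nu_{2}=-\nu,\lambda,\mu,\beta_{1}=0,\beta_{2}=0$ and the prescribed geometric frame $\{x_{0},y_{0},b_{0},l_{0}\}$ at $p_{0}$. The condition $\beta_{1}=\beta_{2}=0$ then implies by Proposition \ref{P:Parallel b} that $M^{2}_{1}$ has parallel normalized mean curvature vector field; the fact that $\nu\neq\mathrm{const}$ (equivalently $\nu_{1}-\nu_{2}\neq\mathrm{const}$) ensures by Proposition \ref{P:CMC} that the mean curvature is not parallel, i.e.\ $\|H\|\neq\mathrm{const}$. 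The first fundamental form of the resulting surface, read off from \eqref{E:eq10}, is $E=1/|\mu|$, $F=0$, $G=-1/|\mu|$, so $(u,v)$ are automatically canonical parameters.

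The main obstacle I expect is purely computational: the careful book-keeping that shows the fifth, sixth, and eighth equations of \eqref{E:eq6}, together with the canonical-parameter expressions for $\sqrt{E}$, $\sqrt{-G}$, $\gamma_{1}$, $\gamma_{2}$, collapse precisely to the three PDEs in the statement and nothing stronger. Signs involving $\mathrm{sgn}(\mu)$ and the hyperbolic Laplacian $\Delta^{h}=\partial_{u}^{2}-\partial_{v}^{2}$ need to be tracked honestly, but no new ideas are required beyond direct substitution into the already-established general fundamental theorem.
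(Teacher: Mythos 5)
Your proposal is correct and follows essentially the same route the paper intends (the paper gives no separate proof of this theorem, relying precisely on the reduction you describe: set $\beta_{1}=\beta_{2}=0$, $\nu_{1}=-\nu_{2}=\nu$, $\gamma_{1}=(\sqrt{|\mu|})_{v}$, $\gamma_{2}=(\sqrt{|\mu|})_{u}$, $\sqrt{E}=\sqrt{-G}=1/\sqrt{|\mu|}$, and check that the system \eqref{E:eq6} collapses to the three stated PDEs before invoking Theorem \ref{T:Fund-Theorem}). The only point worth making explicit is that the first two conditions of \eqref{E:eq6} presuppose $\mu_{u}\mu_{v}\neq 0$, which is not among the hypotheses here, so one should say that the \emph{proof} of Theorem \ref{T:Fund-Theorem} is applied with $\sqrt{E}=\sqrt{-G}=1/\sqrt{|\mu|}$ prescribed directly (the quotient formulas being needed only to recover $E$ and $G$ when they are not given in advance).
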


\vskip 3mm
The meaning of Theorem \ref{T:Fundamental Theorem-canonical} is that
any  Lorentz surface of general type with  parallel normalized mean curvature vector field
is determined up to a rigid motion in $\E^4_2$ by three invariant functions $\lambda$, $\mu$, $\nu$
satisfying the following  system of three natural partial differential equations:
\begin{equation*}
\begin{array}{l}
\vspace{2mm}
\nu_u = -\lambda_v + \lambda (\ln|\mu|)_v;\\
\vspace{2mm}
\nu_v = \lambda_u - \lambda (\ln|\mu|)_u;\\
\vspace{2mm}
\varepsilon (\lambda^2 - \mu^2 +\nu^2) = \ds{\frac{1}{2}|\mu| \Delta^h \ln |\mu|}.
\end{array}
\end{equation*}

\vskip 3mm
\textbf{Remarks}:
\begin{itemize}
\item
The introducing of canonical parameters on a Lorentz surface with  parallel normalized mean curvature
vector field allows us to minimize the number of invariants and the number of partial differential
equations which determine the surface. This solves the problem of Lund-Regge for the class of
Lorentz surfaces with  parallel normalized mean curvature vector field.

\item
 With respect to canonical parameters the coefficients of the first fundamental form and the coefficients
 of the second fundamental tensor  are expressed in terms of the invariants of the surface.
\end{itemize}

\vskip 5mm \textbf{Acknowledgments:}
The second and third authors are  partially supported by the National Science Fund,
Ministry of Education and Science of Bulgaria under contract
DFNI-I 02/14.

\vskip 5mm

\end{document}